\newtheorem{theorem}{Theorem}
\newtheorem{proposition}[theorem]{Proposition}
\newtheorem{lemma}[theorem]{Lemma}
\theoremstyle{definition}
\title{\hspace{.4cm}
First eigenvalue of the Laplacian on compact surfaces 
for large genera}
\author{Antonio Ros}     
\thanks{This work is supported in part by the IMAG–Maria de Maeztu grant CEX2020-001105-M / AEI / 10.13039/501100011033, 
MICINN grant PID2020-117868GB-I00 and Junta de Andalucıa grant P18-FR4049.}
\begin{document}

\begin{abstract}
{\small 
For any Riemannian metric $ds^2$ on a compact surface of genus $g$, 
Yang and Yau proved that the normalized first eigenvalue of the Laplacian $\lambda_1(ds^2)Area(ds^2)$ is bounded in terms of the genus.
In particular, if  $\Lambda_1(g)$ is the supremum for each $g$,
it follows that the asymptotic growth of the sequence 
${\Lambda_1(g)}$ is no larger than the one of  $4\pi g$.
In this paper we improve the result and we show that
\[
\limsup_{g\, \rightarrow\, \infty} 
\,
\frac{1}{g}\Lambda_1(g) \leq 4(3-\sqrt{5})\pi \approx 3.056\pi.
\]
}

\vspace{.1cm}

\noindent
{\it Mathematics Subject Classification:} 53A10, 58C40, 35P15.
\end{abstract}

\maketitle

\section{Introduction}

Let $\Sigma$ a compact orientable surface of genus $g$ and $ds^2$ be a Riemannian metric on it.  The Laplacian 
of the metric is a fundamental operator associated to $ds^2$ and its eigenvalues are given by a divergent sequence 
\[
0=\lambda_0(ds^2) < \lambda_1(ds^2)  \leq \lambda_2(ds^2) \leq \cdots,
\]
where each eigenvalue is counted with multiplicity. The normalized first eigenvalue $ \lambda_1(ds^2)Area(ds^2)$ is invariant under homothetic rescaling and   we define
\[
\Lambda_1(g) = \sup \left\{  \lambda_1(ds^2)Area(ds^2)\, \big{/}  ds^2 \,\, {\rm is\,\, a \,\,Riemannian \,\,metric\,\,on \,\,\Sigma }  \right\}.  
\]
Yang and Yau \cite{yy} gave the upper bound 
\begin{equation}
\Lambda_1(g) \leq \left[\frac{g+3}{2}\right]8\pi,
\label{yy}
\end{equation}
where $\left[x\right]$ denotes the integer part of $x$. Their argument uses as test functions branched conformal maps between $(\Sigma,ds^2)$ and the 
unit sphere $S^2(1)\subset \mathbb{R}^3$. 
This idea was first used by Hersch \cite{hersch} who proved that $\Lambda_1(0)= 8\pi$, the equality holding only for constant curvature metrics, and  later by \cite{bourgui,liyau} and other authors. 

\vspace{.1cm}

Concerning the invariant $\Lambda_1(g)$, among other known results we remark
Nadirashvili \cite{nadir}, who proved that  $\Lambda_1(1)=\frac{8}{\sqrt{3}}\pi \approx 14.510\pi$, where the equality holds for the flat equilateral metric,
and Nayatani and Shoda \cite{nayatani}, $\Lambda_1(2)=16\pi$, the equality holding for the Bolza spherical surface and some other branched spherical metrics.

\vspace{.1cm}
Regarding the metrics   that maximize the functional $\Lambda_1$,
for any genus $g$ we have an {\it extremal metric} $ds^2$ 
with conical singularities on $\Sigma$ such that $\lambda_1(ds^2)Area(ds^2)= \Lambda_1(g)$ and a branched minimal immersion 
$f:\Sigma\longrightarrow S^m(1)$, $m\geq 2$, 
by the first eigenfunctions of $ds^2$, see  Petrides \cite{Petri} and Matthiesen and Siffert \cite{matthie}.
For other related results see  Cianci, Karpukhin and Medvedev \cite{karpu1}, El Soufi and Ilias \cite{elsoufi},  
Fraser and Schoen \cite{fraser}, Gomyou and Nayatani \cite{gnayatani}, 
Montiel and Ros \cite{montiel} and  Nadirashvili and Sire \cite{nadir1}.

\vspace{.2cm}

We next briefly explain the results of this paper. Bourguignon, Li and Yau \cite{bourgui} gave a first eigenvalue upper bound for algebraic Kaehler manifolds which extends \cite{yy} (we will only consider the case of holomorphic curves in the complex projective space) whose main idea is as follows. There is a natural embedding of the complex projective space  $\mathbb{C}{\mathrm P}^n$ with the Fubini-Study metric in the Euclidean space ${H\hspace{-.03cm}M}_1\hspace{-.03cm}(n+1)$ of square Hermitian matrices of order $n+1$ and trace $1$. 
A compact Riemannian surface $(\Sigma,ds^2)$ will be seen as a Riemann surface $\Sigma$ with a conformal metric $ds^2$ on it. 
A full holomorphic map in the complex projective space 
$A:\Sigma\longrightarrow\mathbb{C}{\mathrm P}^n\subset {H\hspace{-.03cm}M}_1\hspace{-.03cm}(n+1)$ defines a complex curve and its  {\it energy} depends  
only of the degree of the curve. By composing  with projective transformations of $\mathbb{C}{\mathrm P}^n$ 
one can get the center of mass of $A$ to be proportional to the identity matrix
and, thus, they obtained that $\lambda_1(ds^2)Area(ds^2)$ is bounded in term of this degree.   

\vspace{.1cm}
Lately Ros \cite{ros2} and Karpukhin and Vinokurov \cite{karpu} further elaborate on this idea by considering the map 
$\phi_a=A+aB:\Sigma\longrightarrow  {H\hspace{-.03cm}M}_1\hspace{-.03cm}(n+1)$, where $a\in\mathbb{R}$ and $B$ is a kind of {\it Gauss map} of the curve
(i.e. $B$ is a spherical map depending on the tangent lines of the curve). 
The energy of $\phi_a$ is a function that depends on  $a$, the {\it degree} of the curve, the genus of $\Sigma$ and the number of branch points (if any). 
Using projective transformations one can control, for certain values of $a$, the  center of mass of $\phi_a$ and in this way obtain new upper bounds for the
normalized  first eigenvalue of $ds^2$. In accordance with this, as any genus-three Riemann surface is either hyperelliptic or a quartic curve in the complex 
projective plane,  one proves that 
 \[
 \Lambda_1(3)  \leq 16(4-\sqrt{7})\pi \approx 21.668\pi,
 \]
   \cite{ros2}  (the corresponding bound in \cite{yy} is $24\pi$).
In \cite{karpu}, using holomorphic maps provided by the Brill-Noether theory, one improves the bound of  Yang and Yau for all genera 
$g \neq  4,6,8,14$.

\vspace{.1cm}

A fundamental step in the proof of the above results is to be able to control the center of mass of the  test  functions.
In this paper we obtain a complete solution for the maps \mbox{$\phi_a:\Sigma\longrightarrow {H\hspace{-.03cm}M}_1\hspace{-.03cm}(n+1)$}.
In Proposition \ref{centro}, using the Algebraic Geometry of the curve $A:\Sigma\longrightarrow  \mathbb{C}{\mathrm P}^n$ and some ideas 
that originate in Ros \cite{ros1,ros2}, we prove  that
\begin{quote}
{\it $\forall\, a$, $0\leq a < 1/2$, after composing with a suitable projective transformation, the center of mass of the map $\phi_{a}$ 
becomes the point $\frac{1}{n+1}I$.}
\end{quote}
Using  this property we give in Theorem \ref{teor2} an inequality for the supremun  first eigenvalue $\Lambda_1(g)$ which improves 
\cite{karpu}.

\vspace{.1cm}

As a main application we focus on the asymptotic behavior of the sequence $\{\Lambda_1(g)\}$.
From (\ref{yy})  we have that the upper limit of $\Lambda_1(g)/g$ is $\leq 4\pi$ and in \cite{karpu} the authors improve this inequality until 
$\leq 3.416\pi$. 
In the following statement we provide a better upper bound which summarizes our contribution, see Theorem \ref{teor3}:

\vspace{.1cm}

\begin{quote}{\it The sequence $\big\{ \Lambda_1(g)\big\}_g$ of extremal normalized  first  eigenvalues of the Laplacian on compact surfaces 
satisfies the following growth properties}
\vspace{-.1cm}
\[
\pi \leq \limsup_{g\rightarrow\infty} \frac{1}{g}\Lambda_1(g) \leq 4(3-\sqrt{5})\pi \approx 3.056\pi.
\label{main}
\]
\end{quote}
The lower bound has been obtained recently by Hide and Magee \cite{hide} (they use  
 a family of hyperbolic metrics and improve previous known bounds in that context).

\pagebreak

\section{Preliminaries} 

\subsection{The complex projective space.} 

\hspace{.1cm}

\hspace{.2cm}

Let $H\hspace{-.03cm}M\hspace{-.03cm}(n+1)=\{A\in gl(n+1,\mathbb{C}) \, / \, \bar{A}=A^t \}$ the space of 
$(n+1)\hspace{-.08cm}\times \hspace{-.08cm}(n+1)$-Hermitian matrices with the Euclidean metric 
\[
\langle A,B\rangle = 2\hspace{.05cm} tr\hspace{.01cm} AB \hspace{.5cm} \forall A,B\in H\hspace{-.03cm}M\hspace{-.03cm}(n+1)
\]
and ${H\hspace{-.03cm}M}_1\hspace{-.03cm}(n+1)=\{A\in H\hspace{-.03cm}M\hspace{-.03cm}(n+1)\, /\,  tr\hspace{.01cm} A=1\}\simeq\mathbb{R}^{n(n+2)}$ 
be the  hyperplane given by the trace $1$ restriction. Along the paper, a clever element of this hyperplane will be the point $\frac{1}{n+1}I$, $I$  
being the identity matrix.

\vspace{.2cm}

The submanifold $\mathbb{C}{\mathrm P}^n=\{A\in {H\hspace{-.03cm}M}_1\hspace{-.03cm}(n+1)\, / \, AA=A \}$ with the induced metric is isometric to the complex projective space with the Fubini-Study metric of constant holomorphic sectional curvature $1$, see Ros \cite{ros1}. 

\vspace{.2cm}

The action of the unitary group $U(n+1)$ on $\mathbb{C}{\mathrm P}^n$ is given by $(P,A)\mapsto \bar{P}^{t}\hspace{-.1cm}AP$, 
where $P\in U(n+1)$ and $A\in \mathbb{C}{\mathrm P}^n$. Hence the embedding of $\mathbb{C}{\mathrm P}^n$ in 
${H\hspace{-.03cm}M}_1\hspace{-.03cm}(n+1)$ is $U(n+1)$-equivariant. 

\vspace{.2cm}

For any $A$ in $\mathbb{C}{\mathrm P}^n $ the tangent space at that point identified with a subspace of 
${H\hspace{-.03cm}M}\hspace{-.03cm}(n+1)$  is given by 
$T_A \mathbb{C}{\mathrm P}^n= \{X \in {H\hspace{-.03cm}M}\hspace{-.03cm}(n+1) \, / \, XA+AX=X\}$ and 
 the second fundamental form $\tilde{\sigma}$ of $\mathbb{C}{\mathrm P}^n \subset 
{H\hspace{-.03cm}M}_1\hspace{-.03cm}(n+1)$ maps  
the tangent vectors $X,Y\in T_A \mathbb{C}{\mathrm P}^n$ into 
$\tilde{\sigma}(X,Y) \in  T_A^\perp \mathbb{C}{\mathrm P}^n.$

\vspace{.1cm}

Among properties of the embedding we remark the following ones, \cite{ros1}:

\begin{itemize} 
  \item[$\cdot$] Complex projective lines $\mathbb{C}{\mathrm P}^1\subset \mathbb{C}{\mathrm P}^n$ are totally geodesic and,  when viewed in ${H\hspace{-.03cm}M}_1\hspace{-.03cm}(n+1)$, are given by round 2-spheres of radius one. 
\vspace{.1cm}
 \item[$\cdot$] If $J$ is the complex structure in $\mathbb{C}{\mathrm P}^n$, then 
  $\tilde{\sigma}(JX,JY)=\tilde{\sigma}(X,Y)$,  $\forall \,X,Y\in T_A \mathbb{C}{\mathrm P}^2$.
 \vspace{.1cm} 
 \item[$\cdot$]  $\widetilde{\nabla} \tilde{\sigma}=0$, i. e. the second fundamental form is parallel.
\vspace{.1cm}
\item[$\cdot$]  $\mathbb{C}{\mathrm P}^n$ is a minimal submanifold in the sphere $S^{n(n+2)-1}$ of 
${H\hspace{-.03cm}M}_1\hspace{-.03cm}(n+1)$ of center $\frac{1}{n+1}I$ and radius $\sqrt{\frac{2n}{n+1}}$.
\end{itemize}
  
\vspace{.1cm}

Now we consider {\it the convex hull}  
of the complex projective space in ${H\hspace{-.03cm}M}_1\hspace{-.03cm}(n+1)$. 
 If $A\in HM(n+1)$ we will use the notation
$A > 0$, resp. $A\geq 0$, when the Hermitian matrix is positive definite, resp. positive semidefinite. 
Then the convex hull ${\mathcal H}$ of $\mathbb{C}{\mathrm P}^n$ in ${H\hspace{-.03cm}M}_1\hspace{-.03cm}(n+1)$
verifies the following properties, see \cite{bourgui}, \cite{ros2}:

\vspace{.1cm}
\begin{itemize}
\item[$\cdot$] ${\mathcal H} = \{A\in {H\hspace{-.03cm}M}_1\hspace{-.03cm}(n+1)\, / \, A\geq 0 \}$, 

\vspace{.2cm}

\item[$\cdot$]   
$int\,{\mathcal H}= \{ A\in {\mathcal H} \, / \, rank\hspace{.02cm} A =n+1 \} \, = \, 
\{ A\in {H\hspace{-.03cm}M}_1\hspace{-.03cm}(n+1) \, / \, A> 0\}$, where by $int\,{\mathcal H}$ 
we denote the topological interior of ${\mathcal H}$ in ${H\hspace{-.03cm}M}_1\hspace{-.03cm}(n+1)$, 

\vspace{.2cm}

\item[$\cdot$] $\partial{\mathcal H} = \{A\in {\mathcal H} \, / \, rank \hspace{.02cm} A \leq n \}$,

\vspace{.2cm}

\item[$\cdot$] $\mathbb{C}{\mathrm P}^n= \{A\in {\mathcal H} \, / \, rank \hspace{.02cm} A=1\}$,

\vspace{.2cm}

\item[$\cdot$]   $\{A\in {\mathcal H} \, / \, rank \hspace{.02cm} A\leq 2\}$ is the union of  all the unit $3$-balls in ${H\hspace{-.03cm}M}_1\hspace{-.03cm}(n+1)$ enclosed by complex projective lines $\mathbb{C}{\mathrm P}^1 \subset \mathbb{C}{\mathrm P}^n$.
\end{itemize}

\vspace{.2cm}

The projection of $\mathbb{C}^{n+1}-\{0\}$ over $\mathbb{C}{\mathrm P}^n$ 
\vspace{-.1cm}
\[
z\longmapsto \frac{1}{|z|^2}\bar{z}^t z,
\]
with $z=(z_0,z_1, \ldots, z_n)$, defines the identification between $\mathbb{C}^{n+1}\hspace{-.1cm}-\{0\}/\hspace{-.15cm}\sim$, the usual projective 
space with homogeneous coordinates $[z]=[z_0, z_1,\ldots, z_n]$, and $\mathbb{C}{\mathrm P}^n$ viewed as submanifold of 
${H\hspace{-.03cm}M}_1\hspace{-.03cm}(n+1)$. Along this paper both views of the projective space will be used at our convenience.

\vspace{.1cm}

Given a regular matrix $P\in GL(n+1,\mathbb{C})$, the projective transformation $f_P:\mathbb{C}{\mathrm P}^n\longrightarrow \mathbb{C}{\mathrm P}^n$,
is given by
$[z]\longmapsto [zP]$,  i. e. $[z_0,z_1,\ldots,z_n]\longmapsto [(z_0,z_1,\ldots,z_n)P]$. When we describe it in terms of elements of $\mathbb{C}{\mathrm P}^n$ we have
\begin{equation}
\label{projectivity}
f_P:\frac{1}{|z|^2}\bar{z}^t z \longmapsto \frac{1}{|zP|^2}\bar{P}^t\bar{z}^t zP.
\end{equation}

In this paper, as in  \cite{bourgui}, we consider projective transformations $f_P$, where 
$P$ is {a} positive definite { matrix} in $HM(n+1)$: Any other projectivity can be decomposed as  $f_P\circ f_Q$, 
where $P>0$ and $Q$ is a unitary matrix.
Moreover, after multiplying by a positive scalar factor,
we assume that $trace\hspace{.03cm} P=1$. Therefore, up to unitary motions, the space of projective transformations is parametrized by the interior of the convex hull of $\mathbb{C}{\mathrm P}^n$,
\[
f_P:\mathbb{C}{\mathrm P}^n\longrightarrow \mathbb{C}{\mathrm P}^n, \hspace{1cm} P\in int\,{\mathcal H}.
\]  

\vspace{.2cm} 

When $P\in \partial\hspace{.03cm} {\mathcal H}$, then $P$  it is not a regular matrix but we will still interested into the associated projection map $f_P$ defined as follows. 
Let ${\mathcal R}, {\mathcal S}\subset \mathbb{C}{\mathrm P}^n$ be the subspaces determinate by the linear subspaces $Kernel(P), Image(P)\subset \mathbb{C}^{n+1}$, respectively. Then
${\mathcal R}\cap {\mathcal S}=\emptyset$, $\dim {\mathcal R} =n-rank\, P$ and $\dim {\mathcal S} =rank\, P -1$.
The map $f_P:[z]\longmapsto [zP]$ is defined over $\mathbb{C}{\mathrm P}^n$ minus $\mathcal R$, its image is $\mathcal S$,
\begin{equation}
f_P:\mathbb{C}{\mathrm P}^n\hspace{-.03cm}-\hspace{-.03cm}{\mathcal R}\longrightarrow {\mathcal S}
\label{proyeccion}
\end{equation} 
and $P$ belongs to the convex hull of $\mathcal S$ in 
${H\hspace{-.03cm}M}_1\hspace{-.03cm}(n+1)$.
 It can be see as the classic projection map since ${\mathcal R}$ over  ${\mathcal S}$, see Figure \ref{111}, followed by a linear projectivity of the image subspace, 
 ${f_P}_{\big| {\mathcal S}}\hspace{-.03cm}:\hspace{-.02cm}{\mathcal S}\longrightarrow {\mathcal S}$.
\begin{figure}[h]
\begin{center}
\includegraphics[width=8cm]{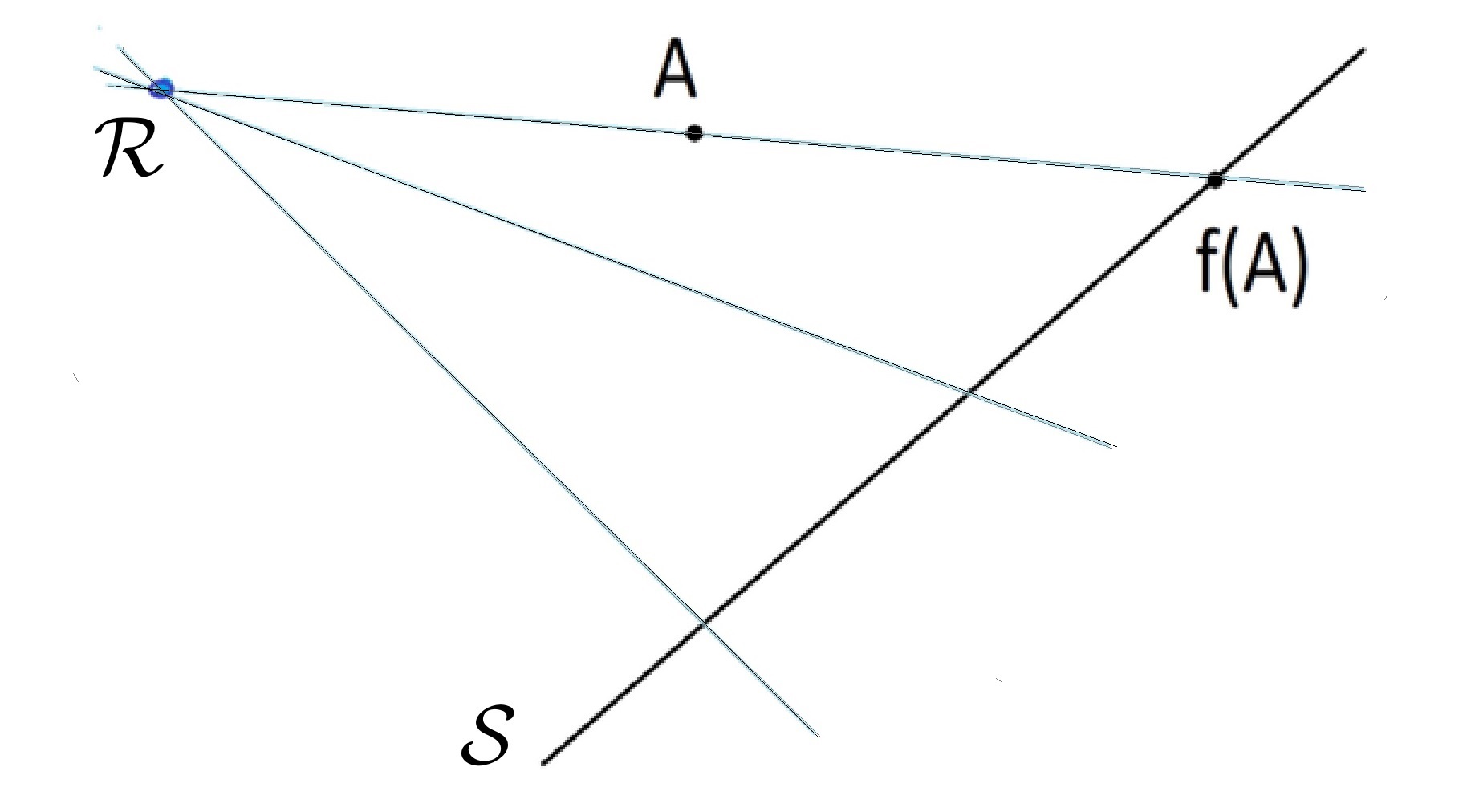}
\caption{}
\label{111}
\end{center}
\end{figure}

Two particular cases of $f_P$ worth mentioning are $rank\hspace{.03cm} P =1$, where the image is the point $P\in\mathbb{C}{\mathrm P}^n$ and 
$rank\hspace{.03cm} P =2$, whose image is the unique projective line $\mathbb{C}{\mathrm P}^1\subset  \mathbb{C}{\mathrm P}^n$ containing $P$ in its convex hull.

\subsection{Geometry of complex curves in $\mathbb{C}{\mathrm P}^n$}  

\hspace{.1cm}

\vspace{.2cm}

Let  $\Sigma$ a compact Riemann surface of genus $g$ and $\varphi:\Sigma\longrightarrow \mathbb{C}{\mathrm P}^n$ be a holomorphic map in the 
complex projective space.
If $\varphi$ is nonconstant then the branch points of the map are isolated and on the unbranched set $\Sigma^\circ$  it defines an immersion.
In this case we say that $\varphi$ is a {\it complex curve}. 
{The curve $\varphi$ is said to be {\it full} } if its image is not contained in any hyperplane $\mathbb{C}{\mathrm P}^{n-1}\subset \mathbb{C}{\mathrm P}^n$.  

\vspace{.1cm}

In this paper  most of the time we think at $\varphi$ as an application into the space of Hermitian matrices and we represent it as
\begin{equation}
A=A_\varphi:\Sigma\longrightarrow \mathbb{C}{\mathrm P}^n \subset {H\hspace{-.03cm}M}_1\hspace{-.03cm}(n+1).
\label{A}
\end{equation}

\vspace{.2cm}

For every  unbranched point $A\in \Sigma^\circ$, the tangent plane could be identified with the corresponding complex 
projective line $\mathbb{C}{\mathrm P}_{\hspace{-.15cm}A}^1$ which is a unit 2-sphere in the Euclidean space. 
We define the {\it Gauss map} $B:{\Sigma^{\circ}} \longrightarrow {H\hspace{-.03cm}M}\hspace{-.03cm}(n+1)$ of the curve 
as the vector joining $A$ with its antipodal point $A^-$ in this $2$-sphere,  $B=A^-\hspace{-.02cm}-\hspace{-.02cm}A$, 
see Figure \ref{tangente}.  
Note that, as a complex line in $\mathbb{C}{\mathrm P}^n$ is determined by two of its points, 
it follows that the vector $B$ determines the tangent $2$-sphere at the point $A$. 
\begin{figure}[h]
\begin{center}
\includegraphics[width=7cm]{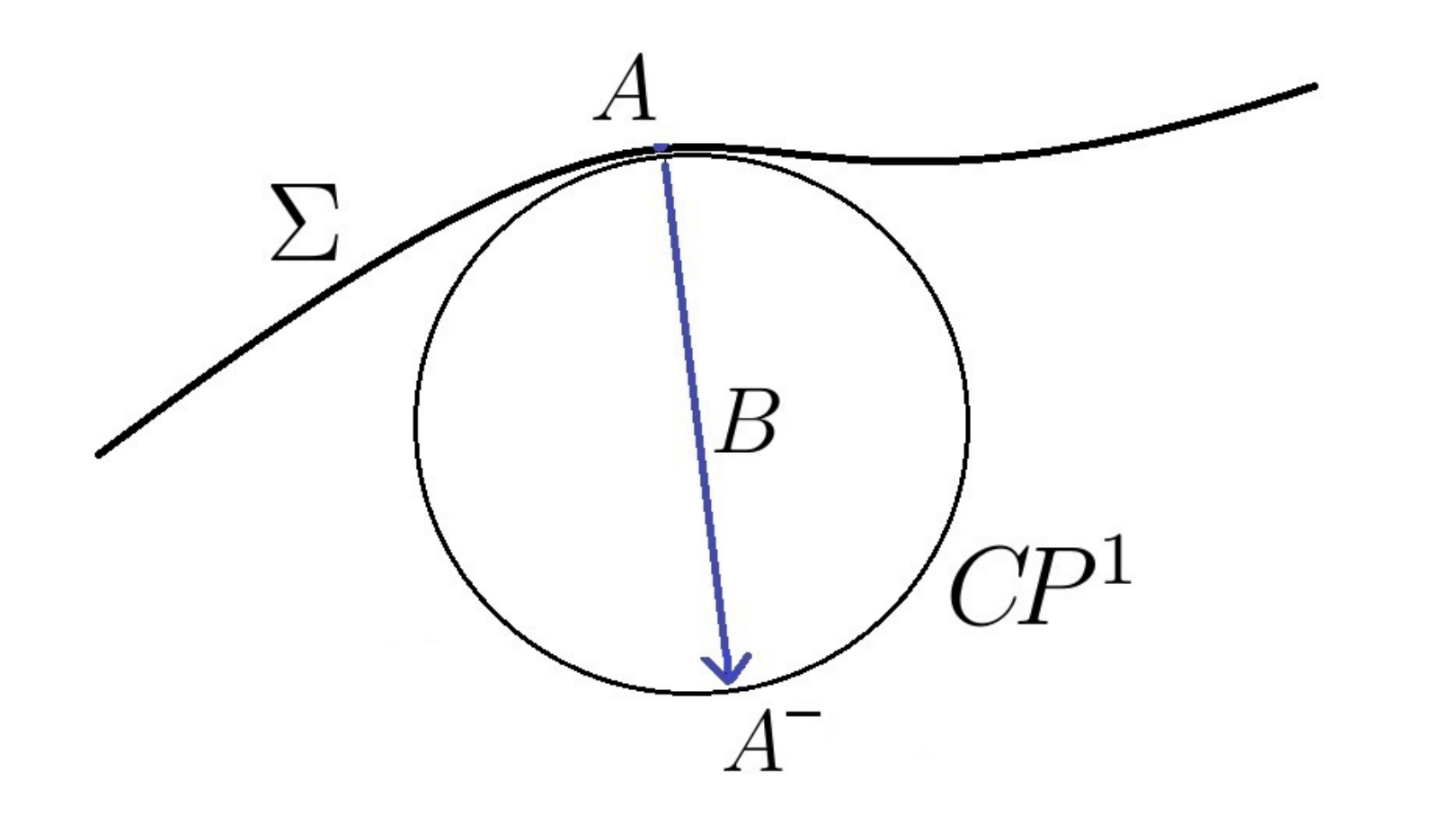}
\caption{}
\label{tangente}
\end{center}
\end{figure}

\vspace{.2cm}

At the unbranched points we will consider, unless otherwise stated, the metric 
$\langle,\rangle$ induced on the surface by the Fubini-Study metric on $\mathbb{C}{\mathrm P}^n$
and we denote by $K$, $d\Sigma$ and 
$\Delta$ the Gauss curvature, the Riemannian measure and Laplacian of the surface.

 \vspace{.2cm}

The complex structure $J$ on $\Sigma$ and the second fundamental form $\sigma$ of  $\Sigma^\circ$ in $\mathbb{C}{\mathrm P}^n$
verifie $\sigma(JX,Y)=\sigma(X,JY)=J\sigma(X,Y)$ and the Gauss equation says that
\begin{equation}
\label{gauss}
K=1-\frac{1}{2}|\sigma|^2.
\end{equation}
The Hessian operator of the vector valued map $A:\Sigma^\circ\longrightarrow  {H\hspace{-.03cm}M}_1\hspace{-.03cm}(n+1)$ will be denoted 
by $\nabla^2\hspace{-.1cm}A$. It coincides with the second fundamental form of $\Sigma^\circ$ in $HM(n+1)$ 
and it can decomposed as
\[
\nabla^2 \hspace{-.1cm}A(X,Y)=\sigma(X,Y)+\tilde{\sigma}(X,Y) \hspace{.8cm} \forall\,  X, Y\in T_A\Sigma.
\]
As complex curves are minimal surfaces in the complex projective space, it follows that the 
Laplacian of the immersion $A:\Sigma^\circ\longrightarrow  {H\hspace{-.03cm}M}\hspace{-.03cm}(n+1)$ is equal to
\begin{equation}
\Delta A =\sum_i \nabla^2\hspace{-.1cm}A(E_i,E_i) 
= \tilde{\sigma}(E_1,E_1)+\tilde{\sigma}(E_2,E_2) = 2\vec{H} = B,
\label{B}
\end{equation}
$E_1,E_2\in T_A\Sigma^\circ$, with $E_2=JE_1$, 
being an orthonormal basis  and $\vec{H}$ the mean curvature vector of the immersion. 

\vspace{.2cm} 

From Lemma 3.2 in  \cite{ros1}, at the points of $\Sigma^\circ$ we have the first rows below
\begin{equation}
\left\{
\begin{array}{ccc}
|I|^2 = 2(n+1)\hspace{1cm} & \langle A,I\hspace{.02cm}\rangle = 2 \hspace{1cm} 
& |A|^2=2 \vspace{.2cm}
\\
\langle B,I \hspace{.02cm}\rangle =0 \hspace{1cm}& \langle B,A\rangle=-2  \hspace{1cm} &|B|^2= 4 
\vspace{.2cm}
\\
\langle \Delta B,I\hspace{.03cm}\rangle=0 \hspace{1cm}& \langle \Delta B,A\rangle= 4  
\hspace{1cm}& \langle \Delta B,B\rangle= - 8 - 2\hspace{.03cm}|\sigma|^2  
\vspace{.2cm}
\\
|\nabla A|^2=2 
\hspace{1cm}
& \langle \nabla A,\nabla B\rangle=-4  \hspace{1cm} & |\nabla B|^2  = 8 +2|\sigma|^2
\end{array}
\right.
\label{calculos}
\end{equation}
and the last one is a consequence of the above. For instance, the Laplacian of the equation $|B|^2=4$ give us 
$|\nabla B|^2 + \langle \Delta B, B\rangle=0$ and then we get
$|\nabla B|^2  = 8 +2\hspace{.03cm}|\sigma|^2.$

\vspace{.3cm}

Now we focus at branch points of the curve $A_\varphi$. 
 Given a nonconstant holomorphic map  \mbox{$A:\Sigma\longrightarrow \mathbb{C}{\mathrm P}^n\subset {H\hspace{-.03cm}M}_1\hspace{-.03cm}(n+1)$} 
and a local complex coordinate  $w, \hspace{.02cm}|w|\hspace{-.03cm}< \hspace{-.02cm}\varepsilon$, around a point $p\in\Sigma$,
using suitable projective coordinates we have 
\[
z= z(w)=\big(1,z_1(w),\ldots,z_n(w)\big) \hspace{-.01cm} \in \hspace{-.01cm} \mathbb{C}^{n+1}\hspace{-.1cm} - \hspace{-.07cm} \{0\} 
\longrightarrow
\mathbb{C}{\mathrm P}^n,     
\]
where $z_i(w)$, $i=1,\ldots,n$, are holomorphic functions. Its relation with the Hermitian matrices is
\begin{equation}
A=A(w) = \frac{1}{|z|^2}\bar{z}^t z . 
\label{A}
\end{equation}
For $w=0$, after a suitable unitary transformation, we have that
\begin{equation}
\left\{
\begin{array}{l}
z(0)=(1,0,\ldots,0) \vspace{.1cm}
\\
A(0)=P_0= 
\left(\hspace{-.12cm}
\begin{array}{cccc}
$\small{1}$\hspace{-.2cm}\vspace{-.2cm}&\hspace{-.3cm}&\hspace{-.3cm}& \hspace{-.1cm}\hspace{-.1cm}
\\
&\hspace{-.06cm}0 && \vspace{-.3cm}\\
&&\hspace{-.3cm}\vspace{-.1cm}\ddots \vspace{-.1cm}& \vspace{-.1cm}\\
&&&\hspace{-.3cm}\vspace{-.2cm}0
\end{array}
\hspace{-.2cm}
\right).
\end{array}
\right.
\label{coordenadas}
\end{equation}
The tangent vector is
\[
z'(w)= \big(0,z'_1(w),\ldots,z'_n(w)\big)\in \mathbb{C}^{n+1} .
\]
If $z'(w)\neq 0$ the point is unbranched and the affine tangent line $\big\{z(w) + \lambda \hspace{.02cm}z'(w) \,/ \lambda \in \mathbb{C}\big\}$
determines the projective tangent one $\mathbb{C}{\mathrm P}_{\hspace{-.15cm}A}^1\subset \mathbb{C}{\mathrm P}^n$.
Moreover, as the Gauss map verifies $B=\Delta A$  it can be expressed at any point of $\Sigma^\circ$ as
\begin{equation}
B= \frac{2}{\big|\partial_{\bar{w}}\partial_{w} A\big|} \partial_{\bar{w}}\partial_{w} A,  \hspace{.3cm}  |w|<\varepsilon,
\label{gauss}
\end{equation}
where we have used that
 the Laplacian $\Delta$ is proportional to the operator $ \partial_{\bar{w}}\partial_{w} $
and the fact that $|B|=2$.

\vspace{.2cm}

If $\varphi$ is nonconstant and $z'(0)=0$, then $p$ is a branch point of the curve. The branch points are isolated and around $w=0$ we have the product
\begin{equation}
z'(w)= w^m h(w),
\label{branching1}
\end{equation}
$h(w)$ being a vectorial holomorphic function with $h(0)\neq 0$, $m\geq 1$ an integer number. 
We say that $m$ is the branching order  at $p$, $b_p(\varphi) =m$. Around this point the induced metric is  $|w|^{2m} \rho^2 |dw|^2$,
where $\rho(0)\neq  0$. The {\it total branching number} of $\varphi$ is defined as the sum of all branching orders
\begin{equation}
b= b(\varphi) = \sum_p b_p(\varphi).
\label{branching}
\end{equation}
The measure $K\hspace{.006cm} d\Sigma$ is smooth, even in the branched case, and the Gauss-Bonnet formula, 
e. g.   Tribuzy \cite{tribuzy}, 
relate the total Gauss curvature with the genus of $\Sigma$ and branching number of the curve
\begin{equation}
\int_\Sigma K \hspace{.02cm}d\hspace{.02cm}\Sigma = 2\pi (2-2g+b).
\label{gauss-bonnet}
\end{equation} 
One other thing worth noting is that the nonzero section $h(w)$, $|w|<\varepsilon$,  in (\ref{branching1})  
allows to extend the tangent projective line of $\varphi$ smoothly through the point $w=0$ and,
therefore,  the Gauss map  is a differentiable on the whole curve, $B:\Sigma\longrightarrow HM(n+1)$.
\begin{lemma}
Let $A:\Sigma\longrightarrow \mathbb{C}{\mathrm P}^n\subset {H\hspace{-.03cm}M}_1\hspace{-.03cm}(n+1)$ be a nonconstant 
holomorphic map and $a\in\mathbb{R}$.  Then
the map $\phi_a:\Sigma\longrightarrow {H\hspace{-.03cm}M}\hspace{-.03cm}(n+1)$, defined as
\begin{equation}
\phi_a = A +a B,
\label{phi}\end{equation}
is smooth everywhere. 
Moreover,  for $0\leq a\leq 1$, the image of $\phi_a$ in contained in the boundary of the convex hull of  $\mathbb{C}{\mathrm P}^n$,  
$\phi_a(\Sigma)\subset  \partial\hspace{.03cm}{\mathcal H}$.
 \end{lemma}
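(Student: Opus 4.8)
The plan is to handle the two assertions separately, with the identity $\phi_a=(1-a)A+aA^-$ on the unbranched set as the main engine. For smoothness there is essentially nothing left to do: $A$ is holomorphic, hence $C^\infty$ as a map into ${H\hspace{-.03cm}M}_1\hspace{-.03cm}(n+1)$, and the Gauss map $B$ has already been seen to extend differentiably across the branch points --- away from them $B=\Delta A$ is smooth, and near a branch point the nonvanishing holomorphic section $h(w)$ in (\ref{branching1}) extends the tangent projective line of $\varphi$ through $w=0$, so $B:\Sigma\longrightarrow {H\hspace{-.03cm}M}\hspace{-.03cm}(n+1)$ is smooth on all of $\Sigma$. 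Hence $\phi_a=A+aB$ is smooth for every $a\in\mathbb{R}$. I would also record at this point that $\phi_a$ has constant trace $1$: from the table (\ref{calculos}) one has $\langle B,I\rangle=0$, i.e. $tr\,B=0$, while $tr\,A=1$, so $tr\,\phi_a=1$ and $\phi_a$ takes values in ${H\hspace{-.03cm}M}_1\hspace{-.03cm}(n+1)$.

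Next I would locate the image for $0\le a\le 1$. On $\Sigma^\circ$ the definition $B=A^--A$ of the Gauss map gives
\[
\phi_a=A+a(A^--A)=(1-a)A+aA^-,
\]
so $\phi_a(p)$ is a convex combination of the two antipodal points $A$ and $A^-$ of the round unit $2$-sphere $\mathbb{C}{\mathrm P}^1_A$; it therefore lies on a diameter of that sphere, in particular in the unit $3$-ball enclosed by $\mathbb{C}{\mathrm P}^1_A$ in ${H\hspace{-.03cm}M}_1\hspace{-.03cm}(n+1)$. By the list of properties of $\mathcal H$ recalled in the Preliminaries, every such $3$-ball lies in $\{X\in\mathcal H:\ rank\,X\le 2\}$; equivalently, $\phi_a\ge 0$ as a convex combination of the positive semidefinite matrices $A$ and $A^-$, with $rank\,\phi_a\le 2$. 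Since $\phi_a(p)\in\mathcal H$ and $rank\,\phi_a\le 2\le n$, the characterization $\partial\hspace{.03cm}\mathcal H=\{X\in\mathcal H:\ rank\,X\le n\}$ yields $\phi_a(p)\in\partial\hspace{.03cm}\mathcal H$. This proves the claim on $\Sigma^\circ$.

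It remains to reach the branch points, which is the only delicate step. I would argue by continuity: $\Sigma^\circ$ is dense in $\Sigma$, $\phi_a$ is continuous (indeed smooth, by the first step), and $\partial\hspace{.03cm}\mathcal H$ is closed in ${H\hspace{-.03cm}M}_1\hspace{-.03cm}(n+1)$, so
\[
\phi_a(\Sigma)=\phi_a\big(\overline{\Sigma^\circ}\big)\subset\overline{\phi_a(\Sigma^\circ)}\subset\partial\hspace{.03cm}\mathcal H .
\]
The point needing care is precisely that $B$, hence $\phi_a$, is genuinely continuous at a branch point, which is what the expansion (\ref{branching1}) provides; alternatively one can avoid the density argument by checking the identity $\phi_a=(1-a)A+aA^-$ directly at a branch point, with $A^-$ the antipode of $A$ in the tangent line extended by the direction $h(0)$. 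I expect this last point to be the main (if mild) obstacle, given the preparation already made in the Preliminaries.
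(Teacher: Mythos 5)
Your proof is correct and follows essentially the same route as the paper: smoothness is reduced to the already-established smoothness of the Gauss map $B$ across branch points, and the inclusion $\phi_a(\Sigma)\subset\partial\mathcal H$ comes from writing $\phi_a=(1-a)A+aA^-$ as a point on a diameter of the tangent $2$-sphere, hence in the rank $\le 2$ stratum of $\mathcal H$ (the paper compresses this into a reference to Figure \ref{tangente}). Your explicit density/continuity argument at the branch points and the remark that $tr\,\phi_a=1$ are welcome elaborations of details the paper leaves implicit.
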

 \begin{proof}
The first part follows from the smoothness of the Gauss map $B$.
The last assertion is a  direct consequence of Figure \ref{tangente}.  
\end{proof}

\vspace{.4cm}

The $degree$ of a nonconstant (possibly branched) complex curve $A:\Sigma\longrightarrow \mathbb{C}{\mathrm P}^n$ is an 
{\it algebro-geometric/topological} invariant given by a positive integer $d$ satisfying the following:

\begin{itemize}
\vspace{.1cm}

\item[$\cdot$] \hspace{.01cm}  The immersed complex curve $A$ is $\mathbb{Z}$-homologous 
to $d$ times the $2$-cycle $\mathbb{C}{\mathrm P}^1\subset \mathbb{C}{\mathrm P}^n$.

\vspace{.1cm}

\item[$\cdot$] \hspace{.01cm}  
With respect to the Fubini-Study, the area of the curve $A:\Sigma\longrightarrow \mathbb{C}{\mathrm P}^n$ is  $Area(\Sigma)=4\pi d$.

\vspace{.1cm}

\item[$\cdot$] \hspace{.01cm}  A general complex hyperplane intersects the immersed curve $\Sigma$ at  $d$ points (counted 
with multiplicity).

\vspace{.1cm}

\item[$\cdot$] \hspace{.01cm} If  $g$ is the genus of $\Sigma$ and $b$ is the total branching number of $A$, the integral of the Gauss curvature  (\ref{gauss}) combined with the Gauss-Bonnet theorem for branched metrics (\ref{gauss-bonnet}) gives
\end{itemize}
\begin{equation}
\int_\Sigma 1 \hspace{.05cm} d\hspace{.03cm}\Sigma= 4\pi d, \hspace{.7cm}
\int_\Sigma |\sigma|^2 d\hspace{.02cm}\Sigma= 8\pi (g+d-1- \frac{1}{2}b).
\label{genus}
\end{equation}
In particular, the total branching number verifies 
\begin{equation}
b /2\leq g+d-1.
\label{beta}
\end{equation}

\vspace{.4cm}

Now we study the behaviour of the vector functions $\phi_a = A + a B$.   

\begin{lemma}   The image of $\phi_a$ lies in a sphere of ${H\hspace{-.03cm}M}_1\hspace{-.03cm}(n+1) $,
\[
\big| \phi_a -\frac{1}{n+1}I\big|^2 = (2a-1)^2 +\frac{n-1}{n+1}
\]
and 
\[
\int_\Sigma |\nabla\phi_a|^2 d\Sigma = 8\pi d\big\{ (2a-1)^2+2a^2\delta\big\},
\]
where $\delta= 1+ \frac{g-1-b/2}{d}\geq 0$.  Furthermore,

\vspace{.1cm}

$(i)$ the energy of $\phi_a$ remains invariant under projective transformations,

\vspace{.1cm}

$(ii)$ the spherical application $\phi_a$ is a conformal map and

\vspace{.1cm}

$(iii)$ the vector-valued function $\phi_a$ belongs to the Sobolev space $W^{1,2}(\Sigma)$.
\label{energy}
\end{lemma}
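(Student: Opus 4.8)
The plan is to work locally on the unbranched set $\Sigma^\circ$, where all quantities in \eqref{calculos} are available, and then argue that the branch points, being isolated and of the right order, neither affect the integrals nor the Sobolev regularity. I would first compute $|\phi_a - \tfrac{1}{n+1}I|^2$ pointwise on $\Sigma^\circ$ by expanding
\[
\big|\phi_a - \tfrac{1}{n+1}I\big|^2 = |A|^2 + 2a\langle A,B\rangle + a^2|B|^2 - \tfrac{2}{n+1}\langle A,I\rangle - \tfrac{2a}{n+1}\langle B,I\rangle + \tfrac{1}{(n+1)^2}|I|^2,
\]
and then substitute the values $|A|^2=2$, $\langle A,B\rangle=-2$, $|B|^2=4$, $\langle A,I\rangle=2$, $\langle B,I\rangle=0$, $|I|^2=2(n+1)$ from \eqref{calculos}. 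Collecting terms gives $2 - 4a + 4a^2 - \tfrac{4}{n+1} + \tfrac{2}{n+1} = (2a-1)^2 + 1 - \tfrac{2}{n+1} = (2a-1)^2 + \tfrac{n-1}{n+1}$, as claimed. Since this identity holds on the dense open set $\Sigma^\circ$ and $\phi_a$ is continuous (indeed smooth) everywhere by the previous lemma, it holds on all of $\Sigma$, so the image lies in the asserted sphere.

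Next I would compute the energy density $|\nabla\phi_a|^2 = |\nabla A|^2 + 2a\langle\nabla A,\nabla B\rangle + a^2|\nabla B|^2$ on $\Sigma^\circ$, again reading off from \eqref{calculos}: $|\nabla A|^2 = 2$, $\langle\nabla A,\nabla B\rangle = -4$, $|\nabla B|^2 = 8 + 2|\sigma|^2$. This yields $|\nabla\phi_a|^2 = 2 - 8a + 8a^2 + 2a^2|\sigma|^2 = 2(2a-1)^2 + 2a^2|\sigma|^2$ — wait, let me recheck: $2 - 8a + 8a^2 = 2(1 - 4a + 4a^2) = 2(2a-1)^2$, good. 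Integrating over $\Sigma$ and using \eqref{genus}, namely $\int_\Sigma 1\,d\Sigma = 4\pi d$ and $\int_\Sigma|\sigma|^2\,d\Sigma = 8\pi(g+d-1-\tfrac12 b)$, gives
\[
\int_\Sigma|\nabla\phi_a|^2\,d\Sigma = 2(2a-1)^2\cdot 4\pi d + 2a^2\cdot 8\pi\big(g+d-1-\tfrac12 b\big) = 8\pi d(2a-1)^2 + 16\pi a^2 d\,\delta,
\]
where $\delta = \tfrac{g+d-1-b/2}{d} = 1 + \tfrac{g-1-b/2}{d}$, which is $\geq 0$ by \eqref{beta}. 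Factoring out $8\pi d$ produces $8\pi d\{(2a-1)^2 + 2a^2\delta\}$, exactly the stated formula. Strictly, these integral computations are a priori only over $\Sigma^\circ$, but since the area form and $|\sigma|^2\,d\Sigma$ extend smoothly across the isolated branch points (as noted in the text around \eqref{genus}), there is no contribution from the branch locus.

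For the three remaining items: $(i)$, projective invariance of the energy, follows because the energy $\int_\Sigma|\nabla\phi_a|^2\,d\Sigma = \tfrac12\int_\Sigma|\nabla\phi_a|^2\,d\Sigma$ is a conformal invariant of the map, and the formula just derived depends only on $d$, $a$, $g$, $b$ — all of which are unchanged when $A$ is replaced by $f_P\circ A$ for a projective transformation $f_P$ (the degree and branching number are algebro-geometric invariants, the genus is topological). For $(ii)$, conformality of $\phi_a$: on $\Sigma^\circ$ one checks $|\partial_x\phi_a|^2 = |\partial_y\phi_a|^2$ and $\langle\partial_x\phi_a,\partial_y\phi_a\rangle = 0$ in a local conformal coordinate $w = x+iy$; this uses that $A$ is a complex (hence conformal, minimal) curve and that $B = \Delta A$ transforms compatibly — concretely, writing everything in terms of $\partial_w$ and $\partial_{\bar w}$ and using that $A$ is holomorphic so that $\partial_w A$, $\partial_{\bar w}A$ span the isotropic directions. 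I expect $(ii)$ to be the only part requiring genuine (though short) computation rather than bookkeeping; it reduces to the facts that complex curves and their Gauss maps are conformal, which is essentially contained in \eqref{calculos} (the relations $|\nabla A|^2 = |\nabla B|^2 - 2|\sigma|^2$ patterns are the Hopf-differential-vanishing statements in disguise). Finally $(iii)$: $\phi_a$ is smooth on $\Sigma^\circ$ with $\int_\Sigma|\nabla\phi_a|^2 < \infty$ by the energy formula, and since the branch points are isolated and $\phi_a$ is bounded (it lies on a fixed sphere) and continuous across them with integrable gradient, a standard removable-singularity argument for $W^{1,2}$ across isolated points in dimension two shows $\phi_a \in W^{1,2}(\Sigma)$. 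The main obstacle is the verification of conformality in $(ii)$, but this is really just unwinding the already-listed identities together with the holomorphicity of $A$.
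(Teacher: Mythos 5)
Your proposal is correct and follows essentially the same route as the paper: expand $|\phi_a-\tfrac{1}{n+1}I|^2$ and $|\nabla\phi_a|^2$ using the identities in (\ref{calculos}), integrate via (\ref{genus}) to get $8\pi d\{(2a-1)^2+2a^2\delta\}$, deduce $(i)$ from the fact that the resulting formula depends only on the projective invariants $d$, $g$, $b$, and handle $(iii)$ by boundedness plus conformal invariance of the Dirichlet integral across the isolated branch points. The only divergence is $(ii)$, where the paper simply cites \cite{ros2} for the conformality computation while you sketch it directly; your sketch is plausible but, like the paper, does not carry out the verification that the $(2,0)$-part of the pullback metric of $A+aB$ vanishes (and note the harmless slip where you wrote the energy as $\tfrac12$ of itself).
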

\begin{proof}
From (\ref{calculos}) we obtain the first equation
\[
\big| \phi_a -\frac{1}{n+1}I\big|^2 = a^2|B|^2+2a\langle B, A-\frac{1}{n+1} \rangle+ |A-\frac{1}{n+1}I|^2 =
\]
\[
4a^2-4a+ \frac{2n}{n+1} = (2a-1)^2+\frac{n-1}{n+1}.
\]

In the same way, at the unbranched points of $A$ we have
\[
\big|\nabla \phi_a\big|^2 = a^2 |\nabla B|^2 +2a\langle \nabla A,\nabla B\rangle + |\nabla A|^2 = (8+2\hspace{.02cm}|\sigma|^2)a^2 -8a +2.
\]
Integrating with respect to $d\hspace{.02cm}\Sigma$ and using (\ref{genus}) 
\[
\int_\Sigma \big|\nabla \phi_a\big|^2 d\hspace{.02cm}\Sigma = 8\pi\big( d+g-1-\frac{1}{2}b\big)a^2 -32\pi d\hspace{.02cm} a + 8\pi d =
\]
\[
8\pi d\big\{ \big( 4 +2\delta \big)a^2 - 4a+1 \big\} = 8\pi d\big\{ (2a-1)^2 +2a^2\delta  \big\}, 
\]
A direct consequence of the above gives $(i)$. 

The assertion in $(ii)$ in proved in \cite{ros2} for $n=2$ and the same computation works for any $n$.

To prove $(iii)$ first observe that the space $W^{1,2}(\Sigma)$ is independent of the smooth metric. Then the assertion follows because
the Dirichlet integral is conformally invariant and that $\phi_a$ is a bounded function.
\end{proof}

\vspace{.2cm}

\subsection{The Brill-Noether Theory.}

\hspace{.1cm}

\vspace{.2cm}

First we recall that the moduli space of closed Riemann surface of genus $g\geq 2$,    ${\mathcal M}_g$,
is a complex algebraic variety of dimension $3g-3$ (with nonempty singular set).
In this context, to say that a property holds for a {\it closed general curve}  means that it happens for all $\Sigma$  in a  certain open dense subset of  
${\mathcal M}_g$.

\vspace{.1cm} 

As a consequence of Brill-Noether theory, see Arbarello, Cornalba, Griffiths and Harris  \cite{ACGH}, Chapter V, we have the following existence result for unbranched full complex curves in term of the genus, degree and dimension of the complex projective space. 

\begin{theorem}[\cite{ACGH}, p.216]  Let $g,n,d\in \mathbb{N}$  with  $n\geq 3$ such that the {\it Brill-Noether constant}  
$\rho =\rho(g,d,n)= g-(n+1)(g-d+n)$ is nonnegative, $\rho\geq 0$. Then a closed general Riemann surface  $\Sigma$ of genus $g$ 
admits a full holomorphic  embedding of degree $d$ in $\mathbb{C}{\mathrm P}^n$,
$\varphi:\Sigma\longrightarrow \mathbb{C}{\mathrm P}^n$. In particular,  $\varphi$ is unbranched.
\label{brill1}
\end{theorem}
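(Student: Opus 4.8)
The plan is to recognise this as the existence half of Brill--Noether theory and to reduce it to the standard package of results collected in \cite{ACGH}, Chapter~V. The first step is to translate the geometric assertion into one about linear series: a full holomorphic map $\varphi\colon\Sigma\to\mathbb{C}{\mathrm P}^n$ of degree $d$ is exactly the datum of a base-point-free linear series $g^n_d$ on $\Sigma$, i.e. a line bundle $L$ with $\deg L=d$ together with a base-point-free $(n+1)$-dimensional subspace $V\subset H^0(\Sigma,L)$; the map is full precisely because $\dim V=n+1$, it is an embedding precisely when the series is very ample, and in that case it is automatically an immersion, hence unbranched (an algebraic embedding of a smooth curve has nowhere vanishing differential). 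Thus it suffices to show that a general curve of genus $g$ carries a very ample $g^n_d$ whenever $n\ge 3$ and $\rho=\rho(g,d,n)=g-(n+1)(g-d+n)\ge 0$.

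I would then assemble three ingredients. \emph{(1) Existence of the series.} By the Kempf and Kleiman--Laksov existence theorems, the Brill--Noether locus $W^n_d(\Sigma)=\{L\in\operatorname{Pic}^d(\Sigma):h^0(L)\ge n+1\}$ is nonempty for every curve of genus $g$ as soon as $\rho\ge 0$, and, by the theorems of Griffiths--Harris and Gieseker, for a general curve the variety $G^n_d(\Sigma)$ of $g^n_d$'s is smooth of pure dimension $\rho$ (and empty if $\rho<0$). Fix such a general $\Sigma$ and a general point $(L,V)$ of a component of $G^n_d(\Sigma)$. \emph{(2) Base-point freeness.} If $(L,V)$ had a base point $p$ then $V\subset H^0(L(-p))$ would be a $g^n_{d-1}$; on a general curve these sweep out a family of dimension $\rho(g,d-1,n)=\rho-(n+1)$ (empty if that is negative, in which case there is nothing to check), so, letting $p$ range over $\Sigma$, the $g^n_d$'s with a base point fill a locus of dimension at most $\rho-n<\dim G^n_d(\Sigma)$; hence the general $(L,V)$ is base-point free. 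This uses only $n\ge 1$. \emph{(3) Very ampleness when $n\ge 3$}, which is the remaining and decisive point.

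The hard part is (3), and I expect it to be the real obstacle, because a dimension count on the fixed curve $\Sigma$ does not close the argument: failure of very ampleness produces an effective degree-two divisor $p+q$ on $\Sigma$ (with $q$ possibly infinitely near $p$) such that $h^0(L(-p-q))\ge n$, i.e. a sub-$g^{n-1}_{d-2}$, and $\rho(g,d-2,n-1)=\rho+(g-d)$ need not be negative. One must instead exploit the genericity of $\Sigma$ itself, working in the universal Brill--Noether family over $\mathcal{M}_g$ (or specialising $\Sigma$ to a chain of elliptic curves and using Eisenbud--Harris limit linear series) to show that the locus of pairs $(\Sigma,g^n_d)$ whose series fails to separate points or tangent vectors has positive codimension; it is precisely for $n\ge 3$ that the relevant secant-variety estimate leaves enough room, while for $n=2$ one only gets a nodal plane model and for $n=1$ a branched cover. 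This is exactly the content of the discussion in \cite{ACGH}, Chapter~V, cited in the statement. Granting it, the morphism attached to a general $(L,V)\in G^n_d(\Sigma)$ on a general $\Sigma$ is a full holomorphic embedding of degree $d$ into $\mathbb{C}{\mathrm P}^n$, and in particular unbranched, which is what is claimed.
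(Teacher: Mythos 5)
This statement is quoted verbatim from \cite{ACGH} (p.~216) and the paper offers no proof of it at all --- it is used as a black box --- so there is no internal argument to compare yours against. Your reconstruction is the standard route and is essentially correct as an outline: the translation into base-point-free linear series $g^n_d$, the Kempf/Kleiman--Laksov existence of $W^n_d$ for $\rho\geq 0$, the Griffiths--Harris/Gieseker dimension and smoothness statements for a general curve, and the codimension count $\rho(g,d-1,n)+1=\rho-n<\rho$ ruling out base points are all accurate. You also correctly identify the one step that a dimension count on a fixed curve cannot close, namely very ampleness, since $\rho(g,d-2,n-1)=\rho+(g-d)$ can exceed $\rho$; and you correctly locate the resolution (genericity of $\Sigma$ in moduli, via the universal Brill--Noether family or Eisenbud--Harris degeneration) in \cite{ACGH}, Chapter~V, which is exactly what the paper's citation points to. The only caveat is that your step (3) is itself an appeal to the reference rather than a proof, but since the paper does the same for the entire theorem, this is not a gap relative to the paper. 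One minor point worth making explicit: the conclusion ``unbranched'' follows because a closed embedding of the smooth curve $\Sigma$ is in particular an immersion, which is the only place the embedding (rather than mere base-point-freeness) is actually used downstream in the paper, where one needs $b=0$ in Theorem~\ref{teor2}.
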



The condition that $\rho$ is nonnegative can be rewritten as $d\geq n +g\hspace{.03cm}n/(n+1)$ or, equivalently,
$d\geq d(g,n)$, where
\begin{equation}
d(g,n)= \Big[ \frac{(g+1)n}{n+1}\Big] + n
\label{brill}
\end{equation}
and $[x]$ denotes the integer part of the real number $x$.

\section{The center of mass}

We consider a compact Riemannian surface $(\Sigma,ds^2)$, $d\mu$ its Riemannian measure and 
$A:\Sigma\longrightarrow \mathbb{C}{\mathrm P}^n\subset {H\hspace{-.03cm}M}_1\hspace{-.03cm}(n+1)$ a holomorphic map  of the compact Riemann surface  $\Sigma$ in the complex projective space.

\vspace{.2cm}

Let's recall that, in terms of the coordinate $|w|<\varepsilon$, we have the relations 
\[
A=\frac{1}{|z|^2}\bar{z}^t z  \hspace{.6cm}   {\rm and}  \hspace{.6cm} 
B= \frac{2}{\big|\partial_{\bar{w}}\partial_{w} A\big|} \partial_{\bar{w}}\partial_{w} A,
\]
the first at each point and the second at unbranched points.
Moreover, if $A(w)$ is non constant and the center $w=0$ is a branch point, then the Gauss map $B$ 
extends smoothly through the branching.

\vspace{.4cm}

The points $P\in int \hspace{.03cm}{\mathcal H}$ 
induce another curve
$
A_P=f_P\circ A: \Sigma \longrightarrow \hspace{-.01cm}\mathbb{C}{\mathrm P}^n
$
and, in the coordinates above, 
 \begin{equation}
  A_P = \frac{1}{|zP|^2} \bar{P}^t \bar{z}^t z P ,
  \hspace{1cm}
B_P = \frac{2}{\big|\partial_{\bar{w}}\partial_{w} A_P\big|} 
\partial_{\bar{w}}\partial_{w} A_P , 
\hspace{1cm}
\phi_{P,a}= A_P+a B_P \in \hspace{.02cm}{\mathcal H} .
\label{AB}
\end{equation}

\vspace{.2cm}

 We define {\it the center of mass map} 
$\Phi_a:int\, {\mathcal H}\longrightarrow {H\hspace{-.03cm}M}_1\hspace{-.03cm}(n+1)$,
\begin{equation}
\Phi_a(P)=\frac{1}{Area(ds^2)}\int_\Sigma \big(A_P+ a B_P\big) \, d\mu  \hspace{1.5cm}  P\in int \hspace{.03cm}{\mathcal H}.
\label{definicion}
\end{equation}
The map 
\[
(a,P)\in [0,1)\times  int \hspace{.03cm}{\mathcal H} \longmapsto \Phi_a(P) \in  {H\hspace{-.03cm}M}_1\hspace{-.03cm}(n+1)
\]
is continuous and for any $P\in int\hspace{.03cm}{\mathcal H}$,  it follows from  (\ref{AB}) that 
\begin{equation}
A_P= \bar{P}^t\{\ldots\}P \hspace{1.5cm} B_P= \bar{P}^t\{\ldots\}P,
\label{P}
\end{equation}
where $\{\ldots\}$ indicates a certain, unspecified, square matrix. As the vectors $A_P$ and $B_P$ are bounded, the same holds for the center 
of mass $\Phi_a$ map,
\begin{equation}
\Phi_a(P)= \bar{P}^t\{\ldots\}P \hspace{1.5cm} \forall \, P\in  int \hspace{.03cm}{\mathcal H}.
\label{Phi}
\end{equation}

\vspace{.4cm}

In the rest of the section we will be mostly interested in the case $P \in \partial\hspace{.02cm}{\mathcal H}$, that is, when $P$ is a singular matrix.
First we examine the situation $rank\hspace{0.03cm} P\geq 2$.
      

\begin{lemma} Let $\Sigma$ a closed Riemann surface, $A:\Sigma\longrightarrow \mathbb{C}{\mathrm P}^n$, $n\geq 2$, 
a full branched complex curve and $P\in \partial {\mathcal H}$  with $2\leq rank\hspace{.03cm} P=k +1\leq n$.
Then the map  $A_P$ defines a full branched complex curve in a lineal subspace $ \mathbb{C}{\mathrm P}^k\subset  \mathbb{C}{\mathrm P}^n$, 
\mbox{$A_P: \Sigma\longrightarrow  \mathbb{C}{\mathrm P}^k\subset {H\hspace{-.03cm}M}_1\hspace{-.03cm}(n+1)$.} In particular, $A_P$ is nonconstant, it has finitely many branch points and the associated Gauss map  
$B_P:\Sigma\longrightarrow  {H\hspace{-.03cm}M}\hspace{-.03cm}(n+1)$ is well defined at every point.
\label{rank}
\end{lemma}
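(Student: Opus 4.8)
The plan is to read $A_P=f_P\circ A$ through the projection $f_P:\mathbb{C}{\mathrm P}^n-\mathcal R\to\mathcal S$ of (\ref{proyeccion}) and to check that the fullness of $A$ is inherited by $A_P$. First I would fix the two projective subspaces $\mathcal R,\mathcal S\subset\mathbb{C}{\mathrm P}^n$ attached to $\mathrm{Kernel}(P)$ and $\mathrm{Image}(P)$, so that $\dim\mathcal R=n-k-1$ and $\dim\mathcal S=k$, with $\mathcal S$ a totally geodesic $\mathbb{C}{\mathrm P}^k\subset\mathbb{C}{\mathrm P}^n\subset HM_1(n+1)$. Since $2\le\mathrm{rank}\,P\le n$, the subspace $\mathcal R$ is proper, and fullness of $A$ gives $A(\Sigma)\not\subset\mathcal R$; hence $A^{-1}(\mathcal R)$ is a proper analytic subset of the curve $\Sigma$, i.e. a finite set of points, and on $\Sigma\setminus A^{-1}(\mathcal R)$ the composition $A_P=f_P\circ A$ is a well-defined holomorphic map into $\mathcal S\cong\mathbb{C}{\mathrm P}^k$.

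Next I would show that $A_P$ extends holomorphically across the finitely many points of $A^{-1}(\mathcal R)$. Around such a point $p$, write $A$ in homogeneous coordinates $z=z(w)$, $|w|<\varepsilon$, holomorphic with $z(0)\ne 0$; then $z(w)P$ is holomorphic in $w$ and not identically zero (else $A(\Sigma)\subset\mathcal R$), so $z(w)P=w^{\ell}\tilde h(w)$ with $\tilde h$ holomorphic, $\tilde h(0)\ne 0$, and $A_P(w)=[\tilde h(w)]$ extends holomorphically through $w=0$. This produces a globally defined holomorphic map $A_P:\Sigma\to\mathbb{C}{\mathrm P}^k\subset HM_1(n+1)$. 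It is full: the $f_P$-preimage of a hyperplane of $\mathcal S$ is, up to $\mathcal R$, the linear join of $\mathcal R$ with that hyperplane, a hyperplane $\mathbb{C}{\mathrm P}^{n-1}\subset\mathbb{C}{\mathrm P}^n$; so if $A_P(\Sigma)$ were contained in a hyperplane of $\mathcal S$, then $A(\Sigma)$ would lie in a hyperplane of $\mathbb{C}{\mathrm P}^n$, contradicting fullness of $A$.

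Finally, since $k\ge 1$, a full map into $\mathbb{C}{\mathrm P}^k$ is nonconstant, so $A_P$ is a nonconstant holomorphic map of the compact surface $\Sigma$; by the generalities on complex curves recalled above it is then a (possibly branched) complex curve whose branch points, being isolated in a compact surface, are finite in number, and whose Gauss map extends smoothly through them by the factorization (\ref{branching1}) applied to $A_P$, so $B_P:\Sigma\to HM(n+1)$ is well defined at every point. The only step that takes real care is the removable-singularity argument at $A^{-1}(\mathcal R)$, and even that is routine once $A$ is written in homogeneous coordinates; the actual content of the lemma is simply that fullness of $A$ survives the projection $f_P$.
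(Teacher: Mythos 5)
Your proposal is correct and follows the same route as the paper: the key point in both is that the $f_P$-pullback of a hyperplane of $\mathcal S\cong\mathbb{C}{\mathrm P}^k$ is a hyperplane of $\mathbb{C}{\mathrm P}^n$, so fullness of $A$ forces fullness (hence nonconstancy) of $A_P$. You simply make explicit the removable-singularity argument at $A^{-1}(\mathcal R)$ via the factorization $z(w)P=w^{\ell}\tilde h(w)$, which the paper leaves implicit under ``the rest of the lemma follows directly.''
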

\begin{proof} We known from (\ref{proyeccion}) that the projection map $f_P$ is defined outside of a subspace ${\mathcal R}^{n-k}$ and its image is a $k$-dimensional subspace $\mathbb{C}{\mathrm P}^k$. 

As $k\geq 1$, the pullback by $f_P$
of a nontrivial hyperplane in $\mathbb{C}{\mathrm P}^k$ is a non trivial hyperplane on $\mathbb{C}{\mathrm P}^n$. Then, as the initial curve $A$ is full, we get that 
$A_P:\Sigma\rightarrow \mathbb{C}{\mathrm P}^k$ is also full. The rest of the lemma follows directly.   
\end{proof}
Note that the set of branch points of $A_P$ contains the ones of $A$ and some others that appear when we project by the matrix $P$.  
For $k\geq 2$ , $A_P:\Sigma\longrightarrow  \mathbb{C}{\mathrm P}^k$
is a projective algebraic complex curve and when $k=1$ we get a nonconstant meromorphic map  $A_P:\Sigma\longrightarrow  \mathbb{C}{\mathrm P}^1$. 

\vspace{.1cm}

\begin{proposition}Let  $A:\Sigma\longrightarrow \mathbb{C}{\mathrm P}^n$ be a closed full branched complex curve and $0\leq a <1$. Then the behaviour of 
the center of mass map $\Phi_a$
at the boundary of $\mathcal H$ is the following:
 \label{1-6} 
\vspace{.1cm}

In the case $a=0$, 

\vspace{.1cm}

$(i)$ $\Phi_0$ extends to a continuous map $\Phi_0:{\mathcal H}\longrightarrow {H\hspace{-.03cm}M}_1\hspace{-.03cm}(n+1)$,

\vspace{.1cm}

$(ii)$  $\Phi_0(\partial{\mathcal H})\subset \partial{\mathcal H}$ and the restriction $\Phi_0\big|_{\partial{\mathcal H}}$  has non-zero degree, and

\vspace{.1cm}

$(iii)$ $\Phi_0(P)=P$, $\forall\, P\in \mathbb{C}{\mathrm P}^n$.

\vspace{.1cm}

If $0<a<1$ and $n\geq 2$, then 

\vspace{.1cm}

$(iv)$ $\Phi_a$ extends continuously to the complement of 
$\mathbb{C}{\mathrm P}^n$, 
 $\Phi_a:{\mathcal H} \hspace{-.05cm} - \hspace{-.05cm} \mathbb{C}{\mathrm P}^n \longrightarrow {H\hspace{-.03cm}M}_1\hspace{-.03cm}(n+1)$,

\vspace{.1cm}

$(v)$  $\Phi_a\big(\partial{\mathcal H} \hspace{-.05cm} - \hspace{-.05cm} \mathbb{C}{\mathrm P}^n \big)\subset \partial{\mathcal H}$ and 

\vspace{.1cm}

$(vi)$ $\Phi_a$ does not extend, in a continuous way,
to the points of $ \mathbb{C}{\mathrm P}^n$.
\end{proposition}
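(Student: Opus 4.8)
The strategy relies on three ingredients: the factorization $A_P=\bar P^{\,t}\{\ldots\}P$, $B_P=\bar P^{\,t}\{\ldots\}P$ of (\ref{P}), which forces $\operatorname{rank}\Phi_a(P)\le\operatorname{rank}P$; the algebro-geometric description of $A_P$ in Lemma \ref{rank}; and the dominated convergence theorem along approaches to $\partial{\mathcal H}$. For $a=0$ one has $\Phi_0(P)=\frac1{Area}\int_\Sigma A_P\,d\mu$, an average of the $\mathbb{C}\mathrm{P}^n$-valued, uniformly bounded map $A_P$. Given $P_0\in{\mathcal H}$, let ${\mathcal R}_0$ be the indeterminacy subspace of $f_{P_0}$ in (\ref{proyeccion}); since $A$ is full, $A^{-1}({\mathcal R}_0)$ is finite, and for every $q$ outside it $z(q)P\to z(q)P_0\neq0$, hence $A_P(q)\to A_{P_0}(q)$ as $P\to P_0$. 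Dominated convergence then gives the continuous extension $\Phi_0(P_0)=\frac1{Area}\int_\Sigma A_{P_0}\,d\mu$, which is $(i)$. If $P\in\mathbb{C}\mathrm{P}^n$ the image of $f_P$ is the single point $P$, so $A_P\equiv P$ off a finite set and $\Phi_0(P)=P$, which is $(iii)$. For $(ii)$, convexity gives $\Phi_0({\mathcal H})\subset{\mathcal H}$, while on $\partial{\mathcal H}$ the factorization gives $\operatorname{rank}\Phi_0(P)\le\operatorname{rank}P\le n$, i.e.\ $\Phi_0(\partial{\mathcal H})\subset\partial{\mathcal H}$; for the degree, if $Q\in\mathbb{C}\mathrm{P}^n$ and $\Phi_0(P)=Q$, then an average of $\mathbb{C}\mathrm{P}^n$-valued points equals the extreme point $Q$ of ${\mathcal H}$, so $A_P\equiv Q$, and fullness of $A$ then forces $\operatorname{rank}P=1$ and $P=Q$; hence $\Phi_0^{-1}(Q)=\{Q\}$, so $\deg\big(\Phi_0|_{\partial{\mathcal H}}\big)$ equals the local degree of $\Phi_0|_{\partial{\mathcal H}}$ at $Q$, which I would compute to be $\pm1$ by linearising $f_P$ at the rank-one matrix $Q$ as in the renormalisation argument of \cite{bourgui} (see also \cite{ros1,ros2}).

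For $0<a<1$ and $P_0\in{\mathcal H}$ with $k:=\operatorname{rank}P_0-1\ge1$ (parts $(iv)$ and $(v)$), Lemma \ref{rank} says $A_{P_0}$ is a full branched complex curve in $\mathbb{C}\mathrm{P}^k$, so its Gauss map $B_{P_0}$ is defined everywhere and $|B_{P_0}|\equiv2$; in particular the integral defining $\Phi_a(P_0)=\frac1{Area}\int_\Sigma(A_{P_0}+aB_{P_0})\,d\mu$ makes sense. Off the finite set consisting of $A^{-1}({\mathcal R}_0)$ together with the branch points of $A_{P_0}$, the maps $f_P$ and the operators $\partial_{\bar w}\partial_w A_P$ converge locally in $C^\infty$ to those of $P_0$ as $P\to P_0$, so $A_P\to A_{P_0}$ and $B_P\to B_{P_0}$ pointwise a.e.; as $|A_P+aB_P|$ is uniformly bounded, dominated convergence gives $\Phi_a(P)\to\Phi_a(P_0)$, which together with the known continuity on $int\,{\mathcal H}$ proves $(iv)$. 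When moreover $P_0\in\partial{\mathcal H}\setminus\mathbb{C}\mathrm{P}^n$, the factorization yields $A_{P_0}+aB_{P_0}=\bar P_0^{\,t}\{\ldots\}P_0$, hence $\Phi_a(P_0)=\bar P_0^{\,t}\{\ldots\}P_0$ has rank $\le\operatorname{rank}P_0\le n$; and since $\Phi_a(P_0)$ also lies in ${\mathcal H}$ (an average of the points $\phi_{P_0,a}\in{\mathcal H}$, $0\le a\le1$), we get $\Phi_a(P_0)\in\partial{\mathcal H}$, which is $(v)$.

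For $(vi)$, fix $P_0\in\mathbb{C}\mathrm{P}^n$ and a complex line $\ell\subset\mathbb{C}\mathrm{P}^n$ through $P_0$, and let ${\mathcal B}_\ell\subset\partial{\mathcal H}$ be the $3$-ball it bounds. For $P$ in the rank-two interior of ${\mathcal B}_\ell$ the image of $f_P$ is $\ell$, so $A_P=m_P\circ A_\ell$ with $A_\ell:\Sigma\to\ell$ a fixed branched covering (nonconstant, since $A$ is full) and $m_P$ a M\"obius transformation of $\ell\cong S^2$ which degenerates to the constant $P_0$ as $P\to P_0$ inside ${\mathcal B}_\ell$. Since $\ell$ is totally geodesic, the Gauss $2$-sphere of $A_P$ at every point is $\ell$ itself, so $B_P=A_P^-\!-A_P$ with $A_P^-$ the antipode in $\ell$; by Hersch-type concentration $\int_\Sigma A_P\,d\mu\to Area\cdot P_0$ and $\int_\Sigma A_P^-\,d\mu\to Area\cdot P_0^-$, with $P_0^-$ the antipode of $P_0$ in $\ell$, so
\[
\lim_{{\mathcal B}_\ell\ni P\to P_0}\Phi_a(P)=(1-a)\,P_0+a\,P_0^-.
\]
For $0<a<1$ this differs from $P_0$, and $P_0^-$ depends on $\ell$; since $n\ge2$ there are distinct lines $\ell\ne\ell'$ through $P_0$ with ${\mathcal B}_\ell\cap{\mathcal B}_{\ell'}=\{P_0\}$, and the two limits are then different, so $\Phi_a$ admits no continuous extension to $P_0$.

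The step I expect to be the real obstacle is the degree assertion in $(ii)$: everything else reduces to the rank inequality of (\ref{P}), Lemma \ref{rank}, dominated convergence and a concentration lemma, but identifying $\deg(\Phi_0|_{\partial{\mathcal H}})$ requires a first-order analysis of $\Phi_0$ at the rank-one points, which must be handled with care because $\Phi_0$ is only $C^1$ there — not $C^2$ — the loss of regularity being precisely the bubbling that is responsible for $(vi)$.
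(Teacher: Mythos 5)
Your treatment of $(i)$, $(iii)$, $(iv)$, $(v)$ and $(vi)$ is correct and essentially the same as the paper's: $(iii)$ by the explicit computation $A_{P_0}\equiv P_0$ off a finite set; $(iv)$ by Lemma \ref{rank} plus dominated convergence (fullness making $A^{-1}({\mathcal R}_0)$ finite, which also handles $(i)$); $(v)$ by the factorization $\Phi_a(P)=\bar P^{\,t}\{\ldots\}P$ together with convexity of ${\mathcal H}$ (you are right that the rank bound alone is not enough and positivity must come from averaging points of ${\mathcal H}$ --- the paper is terser here); and $(vi)$ by degenerating a rank-two family onto $P_0$ along a line $\ell$ and observing that the limit $(1-a)P_0+aP_0^-$, which is exactly the paper's $P_0+aB_{P_0}(\mathbb{C}{\mathrm P}^1)$, depends on $\ell$. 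One small imprecision in $(vi)$: for a general rank-two $P$ in the $3$-ball bounded by $\ell$ the kernel of $P$, hence the centre of projection ${\mathcal R}$, varies with $P$, so $A_P$ is \emph{not} globally of the form $m_P\circ A_\ell$ for a single fixed branched cover $A_\ell$ of $\ell$. This costs nothing, since you only need one degenerating family per line with fixed kernel (the paper's $P_\varepsilon=\mathrm{diag}(1,\varepsilon,0,\dots,0)$), but the claim as stated is not accurate.

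The genuine gap is exactly where you suspected: the non-zero degree in $(ii)$. Your reduction --- $\Phi_0(P)=Q\in\mathbb{C}{\mathrm P}^n$ forces $A_P\equiv Q$ because $Q$ is an extreme point of ${\mathcal H}$, and then nonconstancy of $A_P$ for $\mathrm{rank}\,P\geq 2$ (Lemma \ref{rank}) forces $P=Q$, so $\Phi_0^{-1}(Q)=\{Q\}$ --- is correct and a nice observation, but a one-point fibre does not produce a non-zero degree unless you also show the local degree of $\Phi_0\big|_{\partial{\mathcal H}}$ at $Q$ is $\pm1$; that first-order analysis near a rank-one matrix, on the stratified (non-smooth) boundary $\partial{\mathcal H}$, is precisely what you defer to ``linearising as in \cite{bourgui}'' and never carry out. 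The paper does not attempt it either: it imports $(i)$ and $(ii)$ wholesale from Bourguignon--Li--Yau \cite{bourgui} and only proves $(iii)$--$(vi)$. So either complete that local computation or, as the paper does, cite \cite{bourgui} for $(i)$--$(ii)$; as written, $(ii)$ is the one assertion your argument does not establish.
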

\begin{proof}
$(i)$ and $(ii)$ are proved in \cite{bourgui}.

\vspace{.1cm}

To prove $(iii)$, using a unitary transformation, it is sufficient to check it for $P=P_0$,
\[
P_0= 
\left(\hspace{-.12cm}
\begin{array}{ccccc}
$\small{1}$\hspace{-.2cm}\vspace{-.2cm}&\hspace{-.3cm}&\hspace{-.3cm}& \hspace{-.1cm}\hspace{-.1cm}
\\
&\hspace{-.3cm}\small{0}\hspace{-.3cm}&\hspace{-.1cm}&& \vspace{-.3cm}
\\
&&\hspace{-.2cm}0 && \vspace{-.3cm}\\
&&&\hspace{-.3cm}\vspace{-.1cm}\ddots \vspace{-.1cm}& \vspace{-.1cm}\\
&&&&\hspace{-.3cm}\vspace{-.2cm}0
\end{array}
\hspace{-.2cm}
\right).
\]

Except for a finite number of points, for any  $[z]=[z_0,\ldots,z_n]\in \Sigma$ we have $z_0\neq 0$. By direct computation
$A_{P_0}:\Sigma\longrightarrow {H\hspace{-.03cm}M}_1\hspace{-.03cm}(n+1)$ is given by
\[
A_{P_0}: [z]\longmapsto \frac{1}{|zP_0|^2}P_0\bar{z}^t z P_0 = \frac{|z_0|^2}{|z_0|^2}P_0=P_0
\]
and therefore
\[
\Phi_0(P_0)=
\frac{1}{Area(ds^2)}\int_\Sigma A_{P_0} d\mu = P_0.
\]  

\vspace{.2cm}

Now we prove  $(iv)$.  
We consider   $P\in \partial\hspace{.02cm}{\mathcal H}$,  $P\notin \mathbb{C}{\mathrm P}^n$,
\mbox{$2\leq rank\hspace{.03cm}P =k+1\leq n+1$.}
 From Lemma \ref{rank} we have that $A_P:\Sigma\longrightarrow  \mathbb{C}{\mathrm P}^k$ is a full complex curve and the 
 map \mbox{$A_P+a B_P: \Sigma \longrightarrow  {H\hspace{-.03cm}M}_1\hspace{-.03cm}(n+1)$}
makes sense at every points of $\Sigma$. Therefore  the integral
\[
\int_\Sigma \big( A_P + a B_P  \big)  \,d \mu  
\]
is well-defined. Since the integrand in the definition of  the center of mass is uniformly bounded, by the dominated
convergence theorem, it follows that the expresion (\ref{definicion}) has a meaning on 
$\partial\hspace{.03cm}{\mathcal H} \hspace{-.05cm} - \hspace{-.05cm} \mathbb{C}{\mathrm P}^n$  and this extension is continuous.  

\vspace{.2cm}

 The assertion $(v)$ follows directly from (\ref{Phi}). 

\vspace{.2cm}

 Finally we show $(vi)$.  We prove it for the matrix $P_0\in\mathbb{C}{\mathrm P}^1$ used in (3). We have seeing that, except in a finite number of points, $A_{P_0}:\Sigma\longrightarrow \mathbb{C}{\mathrm P}^n$ is constantly equal to $P_0$ and $\Phi_0(P_0)=P_0$.

Let's consider  the point $P_\varepsilon \in 
\hspace{.03cm}
\partial \hspace{.02cm} {\mathcal H}$, $\varepsilon >0$ small enough,
given by the matrix
\[
P_\varepsilon= 
\left(\hspace{-.12cm}
\begin{array}{ccccc}
$\small{1}$\hspace{-.2cm}\vspace{-.3cm}&\hspace{-.3cm}&\hspace{-.3cm}& \hspace{-.1cm}\hspace{-.1cm}
\\
&\hspace{-.3cm}\small{\varepsilon}\hspace{-.3cm}\vspace{-.1cm}&\hspace{-.1cm}&& \vspace{-.1cm}
\\
&&\hspace{-.2cm}0 && \vspace{-.3cm}\\
&&&\hspace{-.3cm}\vspace{-.15cm}\ddots \vspace{-.1cm}& \vspace{-.1cm}\\
&&&&\hspace{-.3cm}\vspace{-.1cm}0
\end{array}
\hspace{-.1cm}
\right).
\]
The map $A_{P_\varepsilon}:\Sigma\longrightarrow \mathbb{C}{\mathrm P}^n$ is a nonconstant meromorphic map which consists of projecting
$\Sigma$ onto the complex line \mbox{$\mathbb{C}{\mathrm P}^1=\{ z_2=0,\ldots,z_n=0\}\subset \mathbb{C}{\mathrm P}^n$} and then 
contract it towards $P_0$, 
\[
(z_0,z_1,0,\ldots,0) \mapsto (z_0,\varepsilon z_1,0,\ldots,0).
\]
When $\varepsilon\rightarrow 0$,  outside of finitely many points of $\Sigma$, $A_{P_\varepsilon}$ converges smoothly to the constant  map $A_{P_0}$ and so it follows that the Gauss map vector $B_{P_\varepsilon}$ converges in $L^2(d\mu)$ to the normal vector of  $\mathbb{C}{\mathrm P}^1$ at the point $P_0$.
If we denote this normal vector by $B_{P_0}(\mathbb{C}{\mathrm P}^1)$ we have  that
\[
\lim_{\varepsilon\rightarrow 0} \Phi_a({P_\varepsilon}) =  P_0 + a B_{P_0}(\mathbb{C}{\mathrm P}^1).
\]
Thus the limit depends not only on $P_0$ but also on the limit straight line $\mathbb{C}{\mathrm P}^1$. Finally, if we repeat the argument by using different projective lines passing through $P_0$, we conclude that $\Phi_a$  
does not admit a continuous extension at that point.  
\end{proof}

\begin{proposition}   Let $ds^2$ be a conformal metric on a compact Riemann surface $\Sigma$ and 
\mbox{$A:\Sigma\longrightarrow \mathbb{C}{\mathrm P}^n$}, $n\geq 2$,  be a full complex curve.  
Then, for 
$0\leq a< 1/2$ the point $\frac{1}{n+1}I$ lies in the image of the center of mass map 
$\Phi_a: int\, {\mathcal H}\longrightarrow {H\hspace{-.03cm}M}_1\hspace{-.03cm}(n+1)$.
\label{centro}
\end{proposition}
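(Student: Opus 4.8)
The plan is to deduce this from the boundary behavior of $\Phi_a$ established in Proposition~\ref{1-6} by a degree (or topological/homotopy) argument. Fix $a$ with $0\le a<1/2$. By Lemma~\ref{energy} the sphere $S_a=\{X\in {H\hspace{-.03cm}M}_1\hspace{-.03cm}(n+1)\,:\,|X-\tfrac{1}{n+1}I|^2=(2a-1)^2+\tfrac{n-1}{n+1}\}$ has positive radius, and $\Phi_a$ maps $int\,{\mathcal H}$ into the closed ball bounded by $S_a$, because each $\phi_{P,a}(\Sigma)$ lies on $S_a$ and $\Phi_a(P)$ is an average of points on $S_a$. So it suffices to show that $\tfrac{1}{n+1}I$ (the center of this ball) is hit by $\Phi_a$ on $int\,{\mathcal H}$.

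First I would treat the case $a=0$ separately: by Proposition~\ref{1-6}$(i)$--$(iii)$, $\Phi_0:{\mathcal H}\to {H\hspace{-.03cm}M}_1\hspace{-.03cm}(n+1)$ is continuous with $\Phi_0(\partial{\mathcal H})\subset\partial{\mathcal H}$, $\Phi_0|_{\partial{\mathcal H}}$ of nonzero degree (as a map into $\partial{\mathcal H}$), and $\Phi_0$ is the identity on $\mathbb{C}{\mathrm P}^n$; this is the Bourguignon--Li--Yau situation and gives $\tfrac{1}{n+1}I\in\Phi_0(int\,{\mathcal H})$ directly (the center of the convex body is not in the image of the boundary map, whose target is $\partial{\mathcal H}$, yet a map of nonzero degree on the boundary forces the interior image to cover the center). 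For $0<a<1/2$ the map $\Phi_a$ is only continuous on ${\mathcal H}\smallsetminus\mathbb{C}{\mathrm P}^n$ (Proposition~\ref{1-6}$(iv)$), with $\Phi_a(\partial{\mathcal H}\smallsetminus\mathbb{C}{\mathrm P}^n)\subset\partial{\mathcal H}$ (part $(v)$), so one cannot invoke a naive degree on all of $\partial{\mathcal H}$. The idea is to use the homotopy $a\mapsto\Phi_a$: as $a$ runs over $[0,1/2)$ the maps $\Phi_a$ are defined and continuous jointly in $(a,P)$ away from $\mathbb{C}{\mathrm P}^n$, they agree with $\Phi_0$ at $a=0$, and on $\partial{\mathcal H}\smallsetminus\mathbb{C}{\mathrm P}^n$ they stay in $\partial{\mathcal H}$, while $\tfrac{1}{n+1}I\notin\partial{\mathcal H}$ (it has full rank). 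Hence $\tfrac{1}{n+1}I$ is never an image of a boundary point under any $\Phi_a$, $0\le a<1/2$, so the local degree of $\Phi_a$ at $\tfrac{1}{n+1}I$ is well-defined and invariant along the homotopy; since it is nonzero at $a=0$, it is nonzero for every $a\in[0,1/2)$, and therefore $\tfrac{1}{n+1}I\in\Phi_a(int\,{\mathcal H})$.

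To make the degree argument precise I would restrict to the compact slab $K=[0,1/2]\times{\mathcal H}$ with the bad locus $[0,1/2]\times\mathbb{C}{\mathrm P}^n$ removed, and note that $\mathbb{C}{\mathrm P}^n$ is a neighborhood deformation retract in ${\mathcal H}$; alternatively one can exhaust $int\,{\mathcal H}$ by slightly shrunk copies ${\mathcal H}_\varepsilon$ (matrices with smallest eigenvalue $\ge\varepsilon$), on whose boundary $\Phi_a$ is continuous and stays uniformly bounded away from $\tfrac{1}{n+1}I$ after one checks that $\Phi_a(\partial{\mathcal H}_\varepsilon)$ cannot contain the full-rank center — this last point needs the estimate that the average of points of $\phi_{P,a}(\Sigma)$ cannot equal $\tfrac{1}{n+1}I$ when $A_P$ degenerates, which follows because $\phi_{P,a}(\Sigma)\subset\partial{\mathcal H}$ for $P\in\partial{\mathcal H}$ (Lemma) forces, in the limit, the center of mass to lie in the convex hull of a lower-dimensional face, not at the full-rank center unless the limiting curve is still full — handled exactly by Proposition~\ref{1-6}$(v)$. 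I expect the main obstacle to be exactly this: controlling $\Phi_a$ near $\mathbb{C}{\mathrm P}^n\subset\partial{\mathcal H}$, where by part $(vi)$ it genuinely fails to extend continuously, so the homotopy/degree bookkeeping must be set up on ${\mathcal H}\smallsetminus\mathbb{C}{\mathrm P}^n$ (or on the ${\mathcal H}_\varepsilon$) rather than on all of ${\mathcal H}$, and one must verify that this removal does not destroy the degree-theoretic conclusion — which it does not, precisely because the target value $\tfrac{1}{n+1}I$ stays away from the image of everything that is thrown out.
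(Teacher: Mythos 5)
Your overall strategy --- reduce to the known case $a=0$ and transport the nonzero topological degree along the homotopy $a\mapsto\Phi_a$, after replacing $\partial{\mathcal H}$ by a compact hypersurface avoiding the discontinuity locus $\mathbb{C}{\mathrm P}^n$ --- is exactly the paper's strategy, and you use Proposition~\ref{1-6}$(i)$--$(iii)$ and $(v)$ in the same way. The gap is in the one step carrying all the analytic content: showing that $\frac{1}{n+1}I\notin\Phi_a(\partial{\mathcal H}_\varepsilon)$ (in your notation) for every $a\in[0,1/2)$. Your justification --- that $\phi_{P,a}(\Sigma)\subset\partial{\mathcal H}$ forces the center of mass into ``the convex hull of a lower-dimensional face'', ``handled exactly by Proposition~\ref{1-6}$(v)$'' --- does not work: the matrices on $\partial{\mathcal H}_\varepsilon$ are positive \emph{definite}, so Proposition~\ref{1-6}$(v)$ says nothing about them, and the convex hull of $\partial{\mathcal H}$ is all of ${\mathcal H}$ (it contains $\mathbb{C}{\mathrm P}^n$), so an average of points of $\partial{\mathcal H}$ can perfectly well equal $\frac{1}{n+1}I$; if that reasoning were valid it would show $\frac{1}{n+1}I\notin\Phi_a(int\,{\mathcal H})$ and refute the proposition itself. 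A telltale sign is that your argument never uses the hypothesis $a<1/2$, which is essential.

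What is actually needed, and what the paper supplies, is a quantitative estimate on the part of the cut-off hypersurface lying near $\mathbb{C}{\mathrm P}^n$. The paper takes ${\mathcal H}_r={\mathcal H}\cap\bar{B}(\frac{1}{n+1}I,r)$ with $1<r<\sqrt{2n/(n+1)}$; since $dist(\frac{1}{n+1}I,\mathbb{C}{\mathrm P}^n)=\sqrt{2n/(n+1)}$, the boundary $\partial{\mathcal H}_r$ misses $\mathbb{C}{\mathrm P}^n$ and splits into a piece of $\partial{\mathcal H}-\mathbb{C}{\mathrm P}^n$ (handled by $(v)$, as you do) and a spherical piece of radius $r$ consisting of nonsingular matrices. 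On that spherical piece one writes
\[
\Phi_a(P)-\tfrac{1}{n+1}I=\Bigl(\Phi_0(P)-\tfrac{1}{n+1}I\Bigr)+\frac{a}{Area(ds^2)}\int_\Sigma B_P\,d\mu
\]
and uses $|B_P|=2$ from (\ref{calculos}) to get $|\Phi_a(P)-\Phi_0(P)|\leq 2a<1$; since such $P$ satisfy $|P-\frac{1}{n+1}I|=r>1$ (and for $r$ close to $\sqrt{2n/(n+1)}$ they are forced close to $\mathbb{C}{\mathrm P}^n$, where $\Phi_0$ is the identity), the point $\Phi_0(P)$ stays at distance greater than $1$ from $\frac{1}{n+1}I$, whence $\Phi_a(P)\neq\frac{1}{n+1}I$. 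This is precisely where $a<1/2$ enters, and it is the ingredient your proposal is missing; without it the homotopy-invariance-of-degree bookkeeping has nothing to rest on near the rank-one stratum.
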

\begin{figure}[h]
\begin{center}
\includegraphics[width=7.8cm]{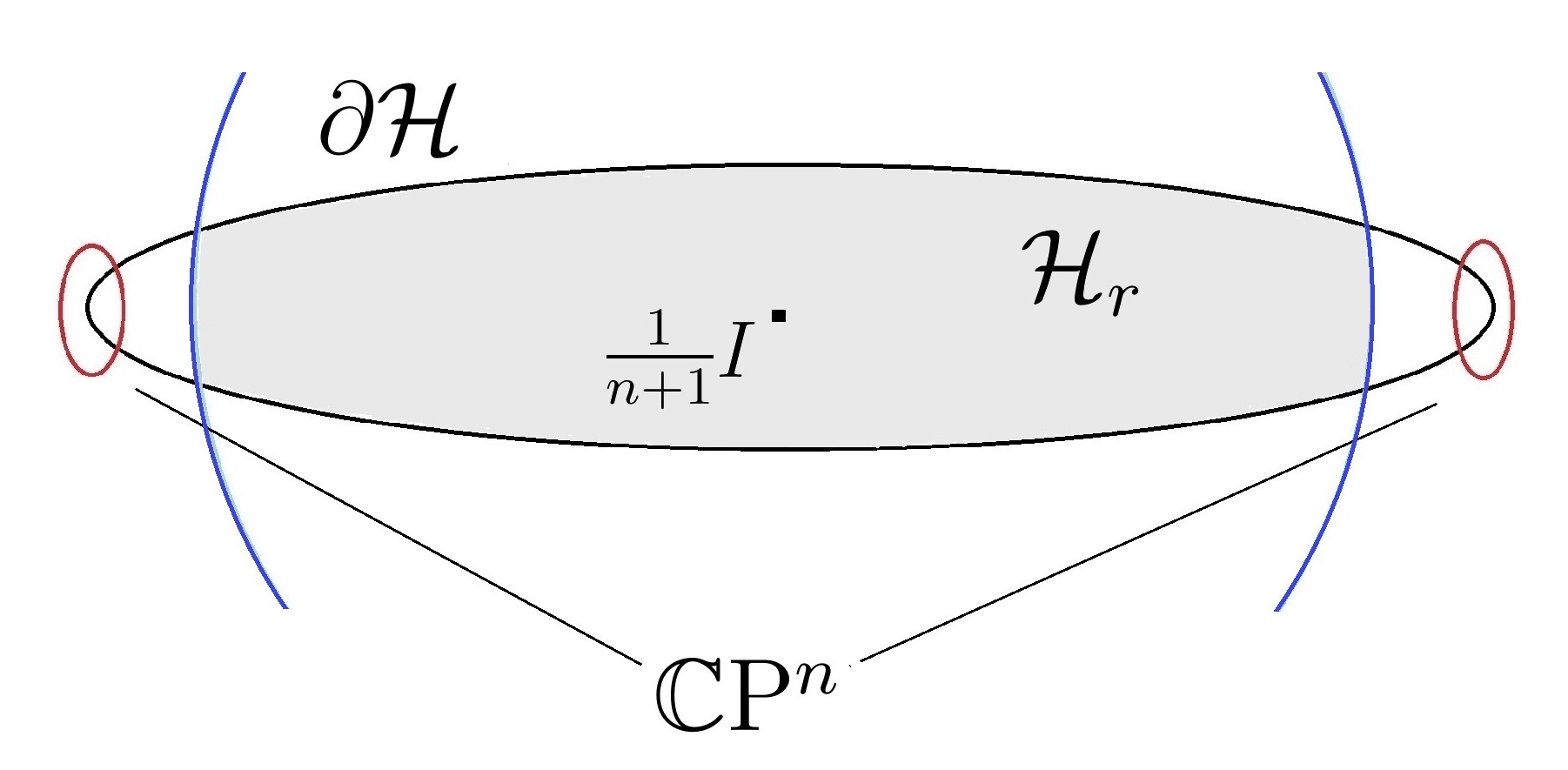}
\caption{${\mathcal H}_r$ is obtained as the intersection of  ${\mathcal H}$ with the ball of radius $r$. The distance between the center
 $\frac{1}{n+1}I$ and $\mathbb{C}{\mathrm P}^n$ is greater than 1 and, when $n$ goes to infinity,
\mbox{$dist( \frac{1}{n+1}I, \partial{\mathcal H}) \rightarrow 0$}.
}
\label{xxxx}
\end{center}
\end{figure}

\vspace{-.3cm}

\begin{proof}  The result is known for $a=0$.
Let's $r$, $1< r < \sqrt{\frac{2n}{n+1}}$ , 
and consider  the convex body
\[
{\mathcal H}_r = 
 {\mathcal H} \cap \bar{B}(\frac{1}{n+1}I,r), 
\]
Figure \ref{xxxx}. Note that the points of $\partial  {\mathcal H}_r$ lie either in  $\partial  {\mathcal H}- \mathbb{C}{\mathrm P}^n$ or in the sphere of center 
$\frac{1}{n+1}I$ and radius $r$.

\vspace{.1cm}

First we prove that
 $\Phi_a(\partial {\mathcal H}_r )$ does not contain the point $\frac{1}{n+1}I$. In fact, if we assume that 
$\Phi_a(P)=\frac{1}{n+1}I$ for some 
$P \in \partial {\mathcal H}_r $, 
from Proposition \ref{1-6}$(v)$
 we have that $P\notin \partial  {\mathcal H}- \mathbb{C}{\mathrm P}^n$. Thus $ P\in int\hspace{.03cm}{\mathcal H}$ and 
\[
|P-\frac{1}{n+1}I|= r.
\]
 
Now, from the identities 
\[
0 =\Phi_a(P)-\frac{1}{n+1}I = \Big( \Phi_0(P)- \frac{1}{n+1}I \Big) + \frac{a}{Area(ds^2)}\int_\Sigma B_P\, d\mu
\]
and (\ref{calculos}) it follows that
\begin{equation}
r= \Big| \Phi_0(P_k)- \frac{1}{n+1}I \Big| \leq  \frac{a}{Area(ds^2)}\int_\Sigma |B_P|\, d\mu =  2a <1,
\label{g}
\end{equation}
which contradicts our choice of $r$. So  $\frac{1}{n+1}I\notin \Phi_a(\partial {\mathcal H}_r )$.

\vspace{.2cm}

We continue with the proof of the result and  divide the remaining argument in two steps.  

\vspace{.2cm}

$\cdot )$ Suppose $a=0$. 
The map $\Phi_0: {\mathcal H}\longrightarrow  {\mathcal H}$ 
is continuous  and \mbox{$\Phi_0:\partial {\mathcal H}\longrightarrow\partial {\mathcal H}$} has non zero topological degree. 
It follows that, for $r$ a bit smaller than $\sqrt{{2n}/({n+1})}$, the image  hypersurface
$\Phi_0(\partial {\mathcal H}_r)$  {\it encloses with nonzero topological degree} the center $\frac{1}{n+1}I$ and this point belongs to the image 
of the domain bounded by the hypersurface:  $\frac{1}{n+1}I\in \Phi_0(int\hspace{.03cm}{\mathcal H}_r)$.

\vspace{.2cm}

$\cdot )$ Now we leave the previous $r$ fixed and we move the parameter $a$ in the interval $0\leq a < 1/2$. The immersed hypersurfaces 
$\Phi_a:\partial{\mathcal H}_r\longrightarrow {H\hspace{-.03cm}M}_1\hspace{-.03cm}(n+1)-\{\frac{1}{n+1}I\}$ 
vary continuously. Then, by using a standard topological argument it follows that, $\forall\hspace{.02cm} a$, the point
$\frac{1}{n+1}I$ belongs to the image by $\Phi_a$ of the domain $int\hspace{.03cm}{\mathcal H}_r$.
This proves the Proposition.
\end{proof}

\section{Upper bounds for the first eigenvalue of the Laplacian}

In this section we estimate the first eigenvalue of the Laplacian of $(\Sigma,ds^2)$, where  $\Sigma$ is a compact Riemann surface of genus $g$ and $ds^2$ a conformal metric on it.  Given a nonconstant holomorphic map $A:\Sigma\longrightarrow \mathbb{C}{\mathrm P}^n$,
at unbranched points, the Riemannian metrics $ds^2$ and $\langle,\rangle$ (the one induced  by the Fubini-Study metric) are conformal
$ds^2=e^{2\theta}\langle,\rangle$, $\theta$ being smooth in $\Sigma^\circ$, and the Riemannian measures are related in the same way 
$d\mu = e^{2\theta}d\hspace{.02cm}\Sigma$. 

\vspace{.2cm}

The first eigenvalue of the Laplacian of the metric $ds^2$ is the larger positive number $\lambda_1(ds^2)$ characterized by the condition
\begin{equation}
\lambda_1(ds^2)\hspace{-.03cm} \int_\Sigma u^2\hspace{.03cm} d\mu \hspace{.03cm} \leq  \int_\Sigma 
|\nabla_{\hspace{-.08cm}ds}\hspace{.02cm}u|^2\hspace{.03cm} d\mu \hspace {.4cm}
\forall \hspace{.03cm} u\in C^1(\Sigma) \hspace{.25 cm} s. \hspace{.04cm}t. \hspace{.2cm} u\neq 0  \hspace{.2cm} {\rm and }\hspace{.05cm}  \int_\Sigma u\hspace{.03cm} d\mu =0,
\label{lambda1}
 \end{equation}
where $|\nabla_{\hspace{-.08cm}ds} \hspace{.02cm}u|$ is the length of the $ds^2$-gradient of $u$. 

\vspace{.2cm}

As test functions we will use the spherical maps 
\mbox{$\phi_a-\frac{1}{n+1}I:\Sigma\longrightarrow \mathbb{C}{\mathrm P}^n \subset\mathbb{R}^{n(n+2)}$}
associated to a complex curve $A:\Sigma\longrightarrow \mathbb{C}{\mathrm P}^n$. 
Therefore the lower part of the inequality in (\ref{lambda1}) transforms to 
$\lambda_1(ds^2)\hspace{-.03cm} Area(ds^2)$.

\vspace{.1cm}

The Dirichlet integral is a conformal invariant and the upper part of  (\ref{lambda1}) could be computed by using the 
induced metric  $\langle,\rangle$. 
In this way we get the energy of $\phi_a$  can be set from the basic invariants of the complex immersion appearing in Lemma \ref{energy}.

\vspace{.1cm}

The tricky point is to be able to get the map $\phi_a-\frac{1}{n+1}I$ to have mean value zero, 
just what we have done in Proposition \ref{centro}.

\vspace{.2cm}

We first prove the following result.
\begin{theorem} Let $\Sigma$ be closed Riemann surface of genus $g$ and 
$A:\Sigma\longrightarrow  \mathbb{C}{\mathrm P}^n$, $n\geq 2$, be a full holomorphic map of degree $d$. Then, for any conformal metric $ds^2$ on $\Sigma$ and $0\leq a\leq 1/2$, 
\[
\lambda_1(ds^2) Area(ds^2)   \leq  8\pi d\left( 1+\frac{2a^2 \delta - \frac{n-1}{n+1}}{(2a-1)^2+ \frac{n-1}{n+1}}   \right),
\]  
where $\delta = 1+\frac{g-1-b/2}{d}$, $b$ being the total branching number of $A$.
\label{teor1}
\end{theorem}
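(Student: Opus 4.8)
The strategy is the standard Hersch–Yang–Yau–Bourguignon–Li–Yau test function argument, now fed by Proposition \ref{centro}. The plan is to use the components of the map $\psi := \phi_{P,a} - \frac{1}{n+1}I : \Sigma \longrightarrow \R^{n(n+2)}$ as test functions in the variational characterization (\ref{lambda1}), where $P \in int\,{\mathcal H}$ is chosen so that the center of mass vanishes. First I would invoke Proposition \ref{centro}: since $0 \le a < 1/2$, there is $P \in int\,{\mathcal H}$ with $\Phi_a(P) = \frac{1}{n+1}I$, i.e.
\[
\int_\Sigma \psi \, d\mu = \int_\Sigma \Big( A_P + a B_P - \tfrac{1}{n+1}I\Big)\, d\mu = 0.
\]
Thus every Euclidean coordinate $\psi^j$ of $\psi$ has mean zero with respect to $d\mu$, so each is an admissible test function (after noting, via Lemma \ref{energy}$(iii)$, that $\psi \in W^{1,2}$, which is enough to apply the variational inequality by density). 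Summing (\ref{lambda1}) over $j = 1, \dots, n(n+2)$ gives
\[
\lambda_1(ds^2) \int_\Sigma |\psi|^2 \, d\mu \;\le\; \int_\Sigma |\nabla_{ds}\psi|^2 \, d\mu.
\]

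The next step is to evaluate the two sides. For the left side, Lemma \ref{energy} (applied to the curve $A_P$, which is again a full complex curve of the same degree $d$ by the degree being a homological invariant, hence unchanged under the projective transformation $f_P$) gives $|\psi|^2 = (2a-1)^2 + \frac{n-1}{n+1}$ pointwise, a constant; therefore $\int_\Sigma |\psi|^2 \, d\mu = \big((2a-1)^2 + \frac{n-1}{n+1}\big)\,Area(ds^2)$. For the right side, the Dirichlet energy is conformally invariant in dimension two, so it equals $\int_\Sigma |\nabla \psi|^2 \, d\Sigma$ computed in the Fubini–Study-induced metric; and $\nabla \psi = \nabla \phi_{P,a}$, so Lemma \ref{energy}$(i)$ (projective invariance of the energy) together with the formula there gives $\int_\Sigma |\nabla_{ds}\psi|^2 \, d\mu = 8\pi d\big\{(2a-1)^2 + 2a^2\delta\big\}$, with $\delta = 1 + \frac{g-1-b/2}{d}$ unchanged because $\delta$ depends only on $g$, $d$, $b$, all of which are invariant under $f_P$ (the branch number $b$ of $A_P$ equals that of $A$ here since $A$, being a full holomorphic map of degree $d$, and $A_P$ differ by a biholomorphism of $\mathbb{C}{\mathrm P}^n$). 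Combining,
\[
\lambda_1(ds^2)\,Area(ds^2) \;\le\; \frac{8\pi d\big\{(2a-1)^2 + 2a^2\delta\big\}}{(2a-1)^2 + \frac{n-1}{n+1}}.
\]

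Finally I would simplify the right-hand side into the stated form by adding and subtracting: writing the numerator as $\big((2a-1)^2 + \frac{n-1}{n+1}\big) + \big(2a^2\delta - \frac{n-1}{n+1}\big)$, the quotient becomes
\[
8\pi d\left(1 + \frac{2a^2\delta - \frac{n-1}{n+1}}{(2a-1)^2 + \frac{n-1}{n+1}}\right),
\]
which is exactly the asserted inequality. The one point requiring a little care — the main obstacle, though a mild one — is the endpoint $a = 1/2$: Proposition \ref{centro} is stated only for $0 \le a < 1/2$, so for $a = 1/2$ one passes to the limit. Both sides of the inequality depend continuously on $a$ (the left side is fixed, the right side is a rational function of $a$ with denominator $(2a-1)^2 + \frac{n-1}{n+1} \ge \frac{n-1}{n+1} > 0$ since $n \ge 2$, hence continuous at $a = 1/2$), so letting $a \to 1/2^-$ in the inequality established for $a < 1/2$ yields the case $a = 1/2$. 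A secondary technical point is justifying that a $W^{1,2}$ map (rather than $C^1$) may be used in (\ref{lambda1}); this is standard, as $C^1$ functions are dense in $W^{1,2}$ and both sides of the Rayleigh quotient are continuous in the $W^{1,2}$ norm, and $\psi$ is in fact continuous and piecewise smooth (smooth off the finitely many branch points of $A_P$), so it can be approximated directly.
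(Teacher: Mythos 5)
Your proposal is correct and follows essentially the same route as the paper: invoke Proposition \ref{centro} to normalize the center of mass of $\phi_a$ to $\frac{1}{n+1}I$ via a projectivity, use the components of $\phi_a-\frac{1}{n+1}I$ as test functions in (\ref{lambda1}), and evaluate both sides with Lemma \ref{energy}. You merely spell out details the paper leaves implicit (conformal invariance of the Dirichlet energy, the $W^{1,2}$ density argument, and the limiting argument for the endpoint $a=1/2$, which the paper's appeal to Proposition \ref{centro} does not literally cover), all of which are handled correctly.
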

\begin{proof}  Proposition \ref{centro} implies that, by applying a certain projectivity to A,
we can assume that the center of the map of 
$\phi_a:\Sigma\longrightarrow {H\hspace{-.03cm}M}_1\hspace{-.03cm}(n+1)$
is equal to $\frac{1}{n+1}I$. Therefore, from Lemma \ref{energy} and  (\ref{lambda1}) we get
\[
\lambda_1(ds^2) Area(ds^2)   \leq  \frac{\textcolor{white}{...}\int_\Sigma |\nabla \phi_a|^2 d\mu\textcolor{white}{...}}{|\phi_a-\frac{1}{n+1}I|^{{2}}}    =  8\pi d\left( 1+\frac{2a^2 \delta - \frac{n-1}{n+1}}{(2a-1)^2+ \frac{n-1}{n+1}}   \right).
\] 
\end{proof}

\vspace{.2cm}

Note that  (\ref{beta})  implies $0\leq \delta \leq 1+(g-1)/d$. By direct computation we can see that the expresion $F(a)= F(n,d,\delta,a)$, $a\in\mathbb{R}$, where 
\[
F(n,d,\delta, a) = 8\pi d\left( 1+\frac{2a^2 \delta - \frac{n-1}{n+1}}{(2a-1)^2+ \frac{n-1}{n+1}}   \right)
\]
satisfies $F(-\infty)=F(+\infty)$, $F'(0)<0$ and $F'(1/2)>0$. As the rational function $F(a)$ has just two critical points, it follows that it attains its global minimum for a value $a=a_{min}$, with $0<a_{min} <1/2$.

\vspace{.2cm}

\begin{theorem} For any integer numbers $g,n\geq 3$ and $0\leq a \leq 1/2$, one has the eigenvalue inequality
\[
\Lambda_1(g) \leq  8\pi d\left( 1+\frac{2a^2 (1+\frac{g-1}{d})- \frac{n-1}{n+1}}{(2a-1)^2+ \frac{n-1}{n+1}}   \right),
\]
where  $d$ is a positive integer with
\[
d \geq d(g,n) = \left[\frac{(g+1)n}{n+1}\right] + n.
\]
\label{teor2}
\end{theorem}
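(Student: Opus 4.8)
The plan is to combine Theorem~\ref{teor1} with the Brill--Noether existence result Theorem~\ref{brill1}, and then to pass from a generic Riemann surface to an arbitrary one by a semicontinuity argument on moduli space. Fix integers $g,n\geq 3$, a parameter $a\in[0,1/2]$, and a positive integer $d\geq d(g,n)$; by the remark following Theorem~\ref{brill1} this is exactly the condition $\rho(g,d,n)\geq 0$. Hence a closed general Riemann surface $\Sigma$ of genus $g$ admits a full holomorphic \emph{embedding} $A:\Sigma\longrightarrow\mathbb{C}{\mathrm P}^n$ of degree $d$. Since an embedding is everywhere immersed, $A$ is unbranched, so its total branching number is $b=0$ and the quantity $\delta$ appearing in Theorem~\ref{teor1} is precisely $\delta=1+\frac{g-1}{d}$.

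Applying Theorem~\ref{teor1} to this curve, for every conformal metric $ds^2$ on $\Sigma$ we get
\[
\lambda_1(ds^2)\,Area(ds^2)\;\leq\; M:=8\pi d\left(1+\frac{2a^2\bigl(1+\frac{g-1}{d}\bigr)-\frac{n-1}{n+1}}{(2a-1)^2+\frac{n-1}{n+1}}\right).
\]
Taking the supremum over all metrics in the conformal class, the conformal invariant $\Lambda_1(\Sigma):=\sup_{ds^2}\lambda_1(ds^2)\,Area(ds^2)$ satisfies $\Lambda_1(\Sigma)\leq M$ for every $\Sigma$ in a fixed open dense subset $U\subset\mathcal{M}_g$ (the one furnished by Theorem~\ref{brill1}). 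What remains is to upgrade this to all of $\mathcal{M}_g$, i.e. to deduce $\Lambda_1(g)=\sup_{\Sigma\in\mathcal{M}_g}\Lambda_1(\Sigma)\leq M$.

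For that last step I would use that $\Sigma\mapsto\Lambda_1(\Sigma)$ is lower semicontinuous on $\mathcal{M}_g$, which is elementary: given $\Sigma_0$ and $\varepsilon>0$, pick a metric $h_0$ in the conformal class of $\Sigma_0$ with $\lambda_1(h_0)\,Area(h_0)\geq\Lambda_1(\Sigma_0)-\varepsilon$; for $\Sigma_k\to\Sigma_0$ one may choose smooth representative metrics $\gamma_k\to\gamma_0$ (after pulling back by suitable diffeomorphisms), write $h_0=e^{2u}\gamma_0$ and set $h_k=e^{2u}\gamma_k$, so that $h_k$ lies in the conformal class of $\Sigma_k$ and $h_k\to h_0$ in $C^\infty$. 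By continuity of $\lambda_1$ and of the area under $C^\infty$ convergence of metrics on a compact surface, $\Lambda_1(\Sigma_k)\geq\lambda_1(h_k)\,Area(h_k)\to\lambda_1(h_0)\,Area(h_0)\geq\Lambda_1(\Sigma_0)-\varepsilon$, so $\liminf_k\Lambda_1(\Sigma_k)\geq\Lambda_1(\Sigma_0)$. Since $U$ is dense, for any $\Sigma_0$ we can take $\Sigma_k\in U$ with $\Sigma_k\to\Sigma_0$, and then $\Lambda_1(\Sigma_0)\leq\liminf_k\Lambda_1(\Sigma_k)\leq M$; taking the supremum over $\Sigma_0$ gives $\Lambda_1(g)\leq M$, which is the assertion. (Alternatively one could simply invoke a continuity statement for $\Lambda_1$ on moduli space.) The genuinely substantive work — controlling the center of mass of $\phi_a$ in Proposition~\ref{centro} and converting it into the energy/eigenvalue bound of Theorem~\ref{teor1} — is already done; the only point requiring care here is precisely this density argument, forced on us because Brill--Noether produces the curve $A$ only for a general (not every) genus-$g$ surface, and it is also where the hypotheses $n\geq 3$ (needed for Theorem~\ref{brill1}) and $g\geq 3$ are used.
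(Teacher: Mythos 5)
Your proposal is correct and follows essentially the same route as the paper: apply the Brill--Noether existence theorem to get an unbranched full degree-$d$ curve on a general genus-$g$ surface, feed $b=0$ into Theorem~\ref{teor1}, and then pass to arbitrary surfaces using the density of general curves in $\mathcal{M}_g$ together with continuity of $\lambda_1(ds^2)\,Area(ds^2)$ under smooth convergence of metrics. Your write-up of that last step (as lower semicontinuity of $\Sigma\mapsto\Lambda_1(\Sigma)$, transplanting the conformal factor to nearby conformal structures) is just a more explicit version of the paper's one-sentence perturbation argument.
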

\begin{proof}
Brill-Noether's technique in Theorem \ref{brill1} says that for a {\it general Riemann surface} $\Sigma$ with $g\geq 3$ there is a  full
holomorphic map $A:\Sigma\longrightarrow  \mathbb{C}{\mathrm P}^n$ of degree $d \geq d(g,n)$ and branching number $b=0$. So, 
for any conformal metric $ds^2$ on $\Sigma$,  from Theorem \ref{teor1} we obtain
 \begin{equation}
\lambda_1(ds^2) Area(ds^2)   \leq  8\pi d\left( 1+\frac{2a^2 (1+\frac{g-1}{d})- \frac{n-1}{n+1}}{(2a-1)^2+ \frac{n-1}{n+1}}   \right).   
\label{brill2}
\end{equation}
To get the result for any metric $ds^2$ on a surface of genus $g$, we observe that, in the space of smooth metrics on a compact surface, 
a neighborhood of $ds^2$ provides an open family of conformal structures. Therefore, the continuity of the first eigenvalue functional give the 
inequality (\ref{brill2}) not only for metrics on a {\it general Riemann surface}, but for arbitrary metrics on the surface of this topology.
\end{proof}

\begin{theorem} The asymptotic growth of the sequence $\{\Lambda_1(g)\}$ verifies the inequality
\[
\limsup_{g\rightarrow \infty} \frac{1}{g}\Lambda_1(g) \leq 4(3-\sqrt{5})\pi \approx 3.056\pi.
\]
\label{teor3}
\end{theorem}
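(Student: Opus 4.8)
The plan is to extract the asymptotics of the bound in Theorem \ref{teor2} by choosing $n$ and $d$ appropriately as functions of $g$ and then optimizing over $a$. The key observation is that Theorem \ref{teor2} gives, for every $g,n\geq 3$, every admissible $d$, and every $a\in[0,1/2]$,
\[
\frac{1}{g}\Lambda_1(g)\;\leq\; \frac{8\pi d}{g}\left(1+\frac{2a^2\bigl(1+\frac{g-1}{d}\bigr)-\frac{n-1}{n+1}}{(2a-1)^2+\frac{n-1}{n+1}}\right),
\]
so it suffices to choose a sequence $n=n(g)$, $d=d(g)$ with $d\geq d(g,n)=\bigl[\tfrac{(g+1)n}{n+1}\bigr]+n$ and then let $g\to\infty$. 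First I would let $n\to\infty$ together with $g$ (say $n=\lfloor\sqrt{g}\rfloor$, or any function with $n\to\infty$ and $n/g\to 0$); then $\frac{n-1}{n+1}\to 1$, and the minimal admissible degree satisfies $d(g,n)/g\to 1$, so one may take $d$ with $d/g\to 1$, whence $1+\frac{g-1}{d}\to 2$. Substituting these limits, the right-hand side tends to
\[
8\pi\left(1+\frac{4a^2-1}{(2a-1)^2+1}\right)\;=\;8\pi\left(1+\frac{(2a-1)(2a+1)}{(2a-1)^2+1}\right).
\]

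Next I would minimize the limiting function
\[
G(a)=8\pi\left(1+\frac{4a^2-1}{(2a-1)^2+1}\right),\qquad 0\leq a\leq \tfrac12,
\]
over $a$. Writing $t=2a-1\in[-1,0]$, one has $4a^2-1=(t+1)^2-1=t^2+2t$ and $(2a-1)^2+1=t^2+1$, so $G=8\pi\bigl(1+\frac{t^2+2t}{t^2+1}\bigr)=8\pi\bigl(2+\frac{2t-1}{t^2+1}\bigr)$. A routine differentiation ($\frac{d}{dt}\frac{2t-1}{t^2+1}=0$ gives $2(t^2+1)-(2t-1)2t=0$, i.e. $t^2-t-1=0$, so $t=\frac{1-\sqrt5}{2}\in(-1,0)$, the admissible root) yields the minimizing value; plugging back in gives $G(a_{\min})=8\pi\bigl(2+\frac{2t-1}{t^2+1}\bigr)$ with $t^2=t+1$, which simplifies to $4(3-\sqrt5)\pi$. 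I expect this algebraic simplification, using $t^2-t-1=0$ repeatedly, to be the only genuinely computational step; it is elementary but should be carried out carefully.

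Finally I would assemble the limsup estimate. For each fixed $a\in[0,1/2]$, choosing $n(g)\to\infty$ slowly and $d(g)$ the least admissible degree, the bound above shows $\limsup_{g\to\infty}\frac1g\Lambda_1(g)\leq G(a)$; taking the infimum over $a$ (i.e. $a=a_{\min}$ with $2a_{\min}-1=\frac{1-\sqrt5}{2}$) gives $\limsup_{g\to\infty}\frac1g\Lambda_1(g)\leq 4(3-\sqrt5)\pi$. The main obstacle — really the only point requiring care rather than routine computation — is justifying that the limits can be taken termwise and uniformly enough: one must check that with $n=n(g)\to\infty$, $n/g\to0$, the quantity $d(g,n)/g$ indeed converges to $1$ (so that a degree $d$ with $d/g\to1$ is admissible), and that the error terms $\frac{n-1}{n+1}-1=O(1/n)$ and $1+\frac{g-1}{d}-2=O(1/d)$ vanish; since the rational expression in $a$ is continuous in its parameters on the relevant compact range and the denominator $(2a-1)^2+\frac{n-1}{n+1}$ stays bounded away from $0$, this is straightforward, but it is the step where one should be explicit.
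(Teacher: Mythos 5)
Your proposal is correct and follows essentially the same route as the paper: apply Theorem \ref{teor2} with $n=n(g)\to\infty$ and $d/g\to 1$ (the paper takes $d=g+1$, $n=[\sqrt{g+1}]$, verifying $\rho=g+1-n^2\geq 0$ directly), pass to the limit to obtain $G(a)=8\pi\bigl(1+\frac{4a^2-1}{(2a-1)^2+1}\bigr)$, and minimize over $a$, where the substitution $t=2a-1$ and the root $t=\frac{1-\sqrt5}{2}$ of $t^2-t-1=0$ recover $a_{\min}=\frac{3-\sqrt5}{4}$ and the value $4(3-\sqrt5)\pi$. Your added care about the admissibility of the minimal degree and the uniformity of the termwise limits is a welcome elaboration of a step the paper treats briskly, but it is not a different argument.
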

\begin{proof}
For any positive integer $g$ we consider the values 
\[
d=g+1, \hspace{.6cm}  n=\left[\sqrt{g+1}\right].
\]
The  Brill-Noether constant is  $\rho = g - (n+1)(g-d+n) = g - (n + 1)(n - 1) = g+1-n^2 \geq 0$ and, so, $d\geq d(g,n)$.
From Theorem  \ref{teor2}, we have
\[
\frac{1}{g}\Lambda_1(g) \leq  8\pi \frac{d}{g}\left( 1+\frac{2a^2 (1+\frac{g-1}{d})- \frac{n-1}{n+1}}{(2a-1)^2+ \frac{n-1}{n+1}}   \right).  
\]
Now we take limits
\[
\lim_{g\rightarrow\infty}\,  \frac{ d}{g} =1, \hspace{1cm}\lim_{g\rightarrow\infty} \,  \frac{n-1}{n+1} =1
\]
and we get that,  for any $a$ between $0$ and $1/2$,
\[
\limsup_{g\rightarrow \infty}\,  \frac{1}{g}\Lambda_1(g) \leq 8\pi \left( 1 + \frac{4a^2-1}{(2a-1)^2+1}  \right). 
\]
If we call $G(a)$ the expression at the right,  by direct calculation
we see that its minimum value is attains for 
\[
a_{min} = \frac{3-\sqrt{5}}{4} \approx 0.191  \hspace{.5cm} {\rm and}  \hspace{.5cm} G(a_{min}) = 4(3-\sqrt{5})\pi \approx 3.056\pi.
\]
This proves the theorem.
\end{proof}


\vspace{.1cm}

A natural variant is to look at the asymptotic behaviour of the first eigenvalue for some families of Riemannian surfaces. 
The one that has attracted the most interest is that of hyperbolic metrics, Buser, Burger and Dodziuk \cite{bbd}.
Let $\Lambda_1(g,-1) $ be the supremum of the first eigenvalue of the Laplacian on compact surfaces of genus $g$ 
and curvature $K\equiv -1$. 
The limit upper bound of the sequence was known to be $\leq 0.25$ $\big($Cheng \cite{cheng}, Huber \cite{huber}$\big)$ and
 Hide and Magee \cite{hide} prove the equality
\[
\limsup_{g\rightarrow \infty} \,  \Lambda_1(g,-1)  = \frac{1}{4}.
\]
 For others asymptotic properties 
 see e. g.   Lipnowski and Wright \cite{lip} and Wu and Xue \cite{wx}.

{\footnotesize
\noindent
{Antonio Ros,} { aros@ugr.es}  \vspace{-.10cm} \\
Department of Geometry and Topology and \vspace{-.10cm}\\
Institute of Mathematics (IMAG), \vspace{-.10cm}\\
University of Granada\vspace{-.10cm}\\
 18071 Granada, Spain.
 }

\end{document}